\documentclass[12pt]{amsart}
\usepackage[colorlinks,linkcolor=blue,citecolor=blue,urlcolor=blue, pdfcenterwindow, pdfstartview={XYZ null null 1.2}, pdffitwindow, pdfdisplaydoctitle=true]{hyperref}
\usepackage[english]{babel}
\usepackage{array}
\usepackage{threeparttable}
\usepackage{rotating}
\usepackage[section]{placeins}
\usepackage{amssymb} 
\usepackage{amsmath}
\usepackage{mathrsfs}
\usepackage{mathabx}
\usepackage{enumerate}
\usepackage{color}
\usepackage{relsize}
\usepackage{amsrefs}        
\usepackage[all]{xy}
\usepackage{comment}
\edef\next{%
 \noexpand\ProvidesPackage{xy}[\xydate\space Xy-pic version \xyversion]}\next

\begin{document}
\newtheorem{lemma}{Lemma}[section]
\newtheorem{lemm}[lemma]{Lemma}
\newtheorem{prop}[lemma]{Proposition}
\newtheorem{coro}[lemma]{Corollary}
\newtheorem{theo}[lemma]{Theorem}
\newtheorem{conj}[lemma]{Conjecture}
\newtheorem{prob}{Problem}
\newtheorem{ques}{Question}
\newtheorem{rema}[lemma]{Remark}
\newtheorem{rems}[lemma]{Remarks}
\newtheorem{defi}[lemma]{Definition}
\newtheorem{defis}[lemma]{Definitions}
\newtheorem{exam}[lemma]{Example}

\newcommand{\N}{\mathbf N}
\newcommand{\Z}{\mathbf Z}
\newcommand{\R}{\mathbf R}
\newcommand{\Q}{\mathbf Q}
\newcommand{\C}{\mathbf C}

\title{Braiding and asymptotic Schur's orthogonality}
\begin{abstract} Let $\pi:G\to U(\mathcal H)$ be a unitary representation of a locally compact group. The braiding operator
$F:\mathcal H\otimes\mathcal H\to \mathcal H\otimes\mathcal H$, which flips the components of the Hilbert tensor product $F(v\otimes w)=w\otimes v$, belongs to the von Neumann algebra $W^*((\pi\otimes\pi)(G\times G))$ if and only if $\pi$ is irreducible. Suppose $G$ is semisimple over a local field. If $G$ is non-compact with finite center, $P<G$ is a minimal parabolic, $\pi:G\to U(L^2(G/P))$ is the quasi-regular representation, then
\[
	\lim_{n\to\infty}\frac{1}{\int_{B_n}\Xi(g)^2dg}\int_{B_n}\pi(g)\otimes\pi(g^{-1})dg=F,
\]
in the weak operator topology, where $\Xi$ is the Harish-Chandra function  of $G$ and  $B_n$ is the ball of radius $n$ around the identity defined by a natural length function on $G$.  
\end{abstract}
\date{August  21, 2023}
\keywords{Furstenberg-Poisson boundary, Koopman representation, locally compact group, quasi-regular representation, Schur's orthogonality relations, semisimple group, unitary representation.}
\subjclass[2020]{Primary: 43A65 ; Secondary: 22E46}

\author{A. Bendikov}
\address{Institute of Mathematics, Wroclaw University}
\email{bendikov@math.uni.wroc.pl}
\author{A. Boyer}
\address{Université Paris Cité}
\email{adrien.boyer@imj-prg.fr }
\author{Ch. Pittet}
\address{I2M CNRS UMR7373 Aix-Marseille Univ. and Univ. of Geneva}
\email{pittet@math.cnrs.fr}
\thanks{This research was supported through the program ``Research in Pairs'' by the Mathematisches Forschungsinstitut Oberwolfach in January 2018. Ch. Pittet and A. Boyer acknowledge support of the FNS grant 200020-200400}

\maketitle

\tableofcontents
\section{Introduction}
Let $V$ and $W$ be vector spaces over a field $k$. According to the universal properties of the tensor product, the bilinear map 
$V\times W\to W\otimes V$, sending $(v,w)$ to $w\otimes v$, induces a unique vector space isomorphism 
\[
	V\otimes W\xrightarrow{\sigma_{V,W}}W\otimes V
\] 
such that $\sigma_{V,W}(v\otimes w)=w\otimes v$, whose inverse is $\sigma_{W,V}$.
If $G$ denotes a group, the above map $\sigma_{V,W}$ is an example of a \emph{symmetric braiding}, making the category of $k[G]$-modules, a \emph{symmetric tensorial category} \cite{ChaPre}. 
In this paper, we are interested in the case when $V=W=\mathcal H$ is a Hilbert space over the field of complex numbers, $G$ is a locally compact Hausdorff group, $\pi:G\to U(\mathcal H)$ is a unitary representation (we always assume that $\pi$ is strongly continuous: for any $v\in\mathcal H$, the map $g\mapsto\pi(g)v$ is continuous), and the symmetric braiding $\sigma_{\mathcal H,\mathcal H}=F$, called the \emph{flip} operator \cite[5.4]{Sav}, is the unique unitary involution, of the Hilbert tensor product of $\mathcal H$ with itself, characterized by the conditions
\[
	\mathcal H\otimes\mathcal H\xrightarrow{F}\mathcal H\otimes\mathcal H
\]
\[
	v\otimes w\mapsto w\otimes v,
\]
for all $v,w\in\mathcal H$. The flip  operator  has close links with the irreducibility of representations and their Schur's orthogonality relations as we now explain/recall\footnote{The equivalence: $\pi$ irreducible $\Leftrightarrow F\in W^*((\pi\otimes\pi)(G\times G))$, is probably folklore knowledge; we are grateful to U. Bader who mentioned it to us.}.
We denote $B(\mathcal H)$ the $C^*$-algebra of bounded operators on $\mathcal H$. If  $X\subset B(\mathcal H)$ is a subset, we denote its \emph{commutant} 
$$X'=\{y\in B(\mathcal H):\forall x\in X, yx=xy \}.$$
Assume $\pi$ is irreducible. It is well known that the irreducibility of $\pi$ is equivalent to the irreducibility of 
	            $$\pi\otimes\pi:G\times G\to U(\mathcal H\otimes\mathcal H).$$ 
Schur's lemma then implies that $(\pi\otimes\pi)(G\times G)^{'}$ is reduced to the multiples of the identity operator.
Hence $(\pi\otimes\pi)(G\times G)^{''}=B(\mathcal H\otimes\mathcal H)$ so it is trivial that $F\in(\pi\otimes\pi)(G\times G)^{''}$. 
Conversely, assume $F\in(\pi\otimes\pi)(G\times G)^{''}$.  The von Neumann double commutant theorem then implies that the braiding operator belongs to the weak closure of $(\pi\otimes\pi)(G\times G)$:
				$$F\in W^*((\pi\otimes\pi)(G\times G))=(\pi\otimes\pi)(G\times G)^{''}.$$
Consider  a closed proper $\pi(G)$-invariant subspace $V<\mathcal H$ and denote $V^{\perp}$ its orthogonal complement. As $\pi$ is unitary, the subspace $V^{\perp}$ is also $\pi(G)$-invariant, hence the closed subspace $V\otimes V^{\perp}$ of $\mathcal H\otimes\mathcal H$ is $(\pi\otimes\pi)(G\times G)$ invariant, hence $W^*((\pi\otimes\pi)(G\times G))$-invariant. In particular it is preserved by $F$. Hence
\[
V^{\perp}\otimes V=F(V\otimes V^{\perp})<	V\otimes V^{\perp},
\]
so that
\[
	V^{\perp}\otimes V< (V^{\perp}\otimes V)\cap(V\otimes V^{\perp})=\{0\}.
\]
This shows that either $V=\{0\}$ or $V^{\perp}=\{0\}$. Hence $\pi$ is irreducible.
To sum-up so far: a unitary representation $\pi:G\to U(\mathcal H)$ is irreducible if and only if 
the braiding operator $F$ is a limit, in the weak operator topology, of linear combinations of elements of $(\pi\otimes\pi)(G\times G)\subset B(\mathcal H\otimes\mathcal H)$. This raises the following general question: 
\emph{given an irreducible unitary representation
$\pi:G\to U(\mathcal H)$, what kind of linear combinations of elements of $(\pi\otimes\pi)(G\times G)$ are close/converge to $F$?}
An educated guess is to consider averages of elements of the type 
$\pi(g)\otimes{\pi}(g^{-1})$,
defined with the help of a sequence of finite symmetric Borel measures $\mu_n$ on $G$  (symmetric means $\mu_n(B^{-1})=\mu_n(B)$ for any Borel subset $B\subset G$). As we will explain, the body of known results leading to this guess can be organized as a succession of partial answers to the following general question/problem. 
\begin{ques}\label{general question easier}
Given an irreducible unitary representation $\pi:G\to U(\mathcal H)$, find families of symmetric measures
$(\mu_n)_{n>0}$ on $G$, and a dense subspace  $D<\mathcal H\otimes\mathcal H$, such that  for all $\alpha,\beta\in D$,
	\[
		\lim_{n\to\infty}\langle\int_{G}\pi(g)\otimes\pi(g^{-1})d\mu_n(g)\alpha,\beta\rangle=
		\langle F\alpha ,\beta\rangle.	
	\]	
\end{ques}
In order to link this question with previous works, about rapid decay (i.e. property RD), asymptotic Schur's orthogonality relations, and ergodic theorems on Furstenberg Poisson boundaries, we start with the following two general formulae, which are obvious to check. For all $v,w,v',w'\in\mathcal H$,
\begin{equation}\label{equation tau}
	\langle F(w'\otimes v),v'\otimes w\rangle=\langle v,v'\rangle\overline{\langle w,w'\rangle},
\end{equation} 
and if $\mu$ is symmetric, the change of variable $g\mapsto g^{-1}$ shows that, for any unitary representation (no irreducibility hypothesis here)
\begin{equation}\label{equation Schur}
\begin{split}
&\langle{\int_{G}\pi(g)\otimes{\pi}(g^{-1})d\mu(g)(w'\otimes v),v'\otimes w\rangle}=\\
&\int_{G}\langle\pi(g)v,w\rangle\overline{\langle\pi(g)v',w'\rangle}d\mu(g).
\end{split}	
\end{equation} 
The next proposition gathers various strengthenings of  Question \ref{general question easier} above.

\begin{prop}\label{prop:1,2,3} Let $\pi:G\to U(\mathcal H)$ be a unitary representation of a locally compact group and $\{\mu_n\}_{n>0}$ be a family of bounded symmetric Borel  measures on $G$. Among the five properties below, the following implications hold true: (1) $\Leftrightarrow$ (2) $\Rightarrow$ (3) $\Rightarrow$ (4) $\Rightarrow$ (5).
\begin{enumerate}
	\item\label{1} There is a uniform bound on the operator norms
	\[
	\sup_{n>0}\|\int_{G}\pi(g)\otimes{\pi}(g^{-1})d\mu_n(g)\|_{op}<\infty,	
	\]
	and a dense subset $V\subset\mathcal H$, such that for all $v,w,v',w'\in V$,
	\[
	\lim_{n\to\infty}\int_{G}\langle\pi(g)v,w\rangle\overline{\langle\pi(g)v',w'\rangle}d\mu_n(g)=
	\langle v,v'\rangle\overline{\langle w,w'\rangle}.
	\]
	\item\label{2} In the weak operator topology,
	\[
	\lim_{n\to\infty}\int_{G}\pi(g)\otimes{\pi}(g^{-1})d\mu_n(g)=F	
	\]
	\item\label{3} For all $v,w,v',w'\in \mathcal H$,
		\[
		\lim_{n\to\infty}\int_{G}\langle\pi(g)v,w\rangle\overline{\langle\pi(g)v',w'\rangle}d\mu_n(g)=
		\langle v,v'\rangle\overline{\langle w,w'\rangle}.
         \]
	\item\label{4} There is a dense subspace $V<\mathcal H$ such that
	for all $v,w,v',w'\in V$,
		\[
		\lim_{n\to\infty}\int_{G}\langle\pi(g)v,w\rangle\overline{\langle\pi(g)v',w'\rangle}d\mu_n(g)=
		\langle v,v'\rangle\overline{\langle w,w'\rangle}.
         \]
	\item\label{5} There is a dense subspace  $D<\mathcal H\otimes\mathcal H$, such that  for all $\alpha,\beta\in D$,
	\[
		\lim_{n\to\infty}\langle\int_{G}\pi(g)\otimes\pi(g^{-1})d\mu_n(g)\alpha,\beta\rangle=
		\langle F\alpha ,\beta\rangle.	
	\]	
\end{enumerate}
\end{prop}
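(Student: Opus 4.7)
The plan is to use formulas \eqref{equation tau} and \eqref{equation Schur} as a dictionary translating the four-fold integrals in (1), (3), (4) into matrix coefficients of the operators $T_n:=\int_{G}\pi(g)\otimes\pi(g^{-1})d\mu_n(g)$ against elementary tensors. Concretely, for all $v,v',w,w'\in\mathcal H$,
\[
\int_{G}\langle\pi(g)v,w\rangle\overline{\langle\pi(g)v',w'\rangle}d\mu_n(g)=\langle T_n(w'\otimes v),v'\otimes w\rangle,
\]
while the candidate limit equals $\langle F(w'\otimes v),v'\otimes w\rangle$. Thus each of (1), (3), (4) is the statement that $T_n\to F$ in the weak operator topology when tested against elementary tensors whose factors lie in a prescribed subset.

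With this dictionary in hand, several implications become formal. For (3) $\Rightarrow$ (4), take $V=\mathcal H$. For (4) $\Rightarrow$ (5), set $D$ to be the algebraic tensor product $V\otimes_{\mathrm{alg}}V$, which is dense in $\mathcal H\otimes\mathcal H$ because $V$ is dense in $\mathcal H$; given $\alpha,\beta\in D$, expand each as a finite sum of elementary tensors with factors in $V$ and use the sesquilinearity of $(\alpha,\beta)\mapsto\langle T_n\alpha,\beta\rangle$ and $(\alpha,\beta)\mapsto\langle F\alpha,\beta\rangle$ to reduce to the elementary-tensor case supplied by the dictionary. For (2) $\Rightarrow$ (3), substitute $\alpha=w'\otimes v$ and $\beta=v'\otimes w$ into the WOT convergence.

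The core implication is (1) $\Leftrightarrow$ (2). For (1) $\Rightarrow$ (2), the previous paragraph's argument already supplies convergence of $\langle T_n\alpha,\beta\rangle$ to $\langle F\alpha,\beta\rangle$ on the dense subspace $V\otimes_{\mathrm{alg}}V$; to extend to arbitrary $\alpha,\beta\in\mathcal H\otimes\mathcal H$, use the uniform operator-norm bound from (1). Given $\epsilon>0$, approximate $\alpha,\beta$ within $\epsilon$ in norm by $\tilde\alpha,\tilde\beta\in V\otimes_{\mathrm{alg}}V$, and estimate
\[
|\langle T_n\alpha,\beta\rangle-\langle F\alpha,\beta\rangle|\le|\langle T_n\tilde\alpha,\tilde\beta\rangle-\langle F\tilde\alpha,\tilde\beta\rangle|+C\epsilon(\|\alpha\|+\|\beta\|+\epsilon),
\]
where $C:=\sup_{n}\|T_n\|_{op}+\|F\|<\infty$; letting $n\to\infty$ and then $\epsilon\to0$ finishes. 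For (2) $\Rightarrow$ (1): WOT convergence implies, for each $\alpha$, weak convergence of $T_n\alpha$, so a double application of the Banach--Steinhaus theorem yields $\sup_n\|T_n\|_{op}<\infty$; the matrix-coefficient convergence is then the already-proved (2) $\Rightarrow$ (3) applied with $V=\mathcal H$.

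The only substantive step is the $3\epsilon$-type approximation in (1) $\Rightarrow$ (2), which is exactly where the uniform operator-norm bound is used; every other implication is a formal consequence of \eqref{equation tau} and \eqref{equation Schur}, sesquilinearity, and the density of the algebraic tensor product $V\otimes_{\mathrm{alg}}V$ in $\mathcal H\otimes\mathcal H$.
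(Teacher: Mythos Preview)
Your proof is correct and follows essentially the same approach as the paper's own treatment (which is given tersely in the Remark following the proposition): use Formulae \eqref{equation tau} and \eqref{equation Schur} to translate between the four-vector integrals and matrix coefficients of $T_n$ against elementary tensors, invoke Banach--Steinhaus for $(2)\Rightarrow(1)$, and use the uniform norm bound plus a density/$3\epsilon$ argument for $(1)\Rightarrow(2)$. Your version simply fills in the details the paper omits.
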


\begin{rema} The implications $\mbox{(\ref{1})}\Rightarrow\mbox{(\ref{2})}\Rightarrow\mbox{(\ref{3})}$ are easily checked with the help of Formulae \emph{(\ref{equation tau})} and \emph{(\ref{equation Schur})}. The implication $\mbox{(\ref{2})}\Rightarrow\mbox{(\ref{1})}$ follows from  the Banach-Steinhaus theorem. 
We don't know if $\mbox{(\ref{3})}\Rightarrow\mbox{(\ref{2})}$. Notice that Property $\mbox{(\ref{3})}$ prevents the existence of a non-zero $\pi(G)$-invariant subspace $V<\mathcal H$ with $V^{\perp}$ non-zero. Hence any property among $\mbox{(\ref{1})},\mbox{(\ref{2})},\mbox{(\ref{3})}$, implies the irreducibility of $\pi$. If $V<\mathcal H$ is a subspace, we denote $V\otimes_{\emph{alg}}V$ the linear span in $\mathcal H\otimes\mathcal H$ of elements of the type $v\otimes w$ with $v,w\in V$. If $V$ is a given dense subspace of $\mathcal H$, then, obviously, Property $\mbox{(\ref{4})}$ with $V$, is equivalent to Property $\mbox{(\ref{5})}$ with $D=V\otimes_{\emph{alg}}V<\mathcal H\otimes\mathcal H$. 
\end{rema}

Any tempered  irreducible unitary representation $(\pi,\mathcal H)$ of a semisimple group $G$ with finite center over a local field, verifies Property (\ref{4}) of Proposition \ref{prop:1,2,3} above, with  $V<\mathcal H$ the subspace of $K$-finite vectors of a maximal compact subgroup $K$ of $G$, and
\[
	d\mu_n(g)=f\cdot\frac{\mathbf{1}_{B_n}(g)}{n^d}dg,
\]
where $d\in\mathbb N,\, f>0$ are constants depending on $\pi$ and $G$, and $B_n$ is the ball of radius $n$ around the identity element of $G$, with respect to a length function $L(g)=d(gx_0,x_0)$, defined with the help of a natural left $G$-invariant distance $d$, and a base point $x_0$, on the symmetric space, or - in the case the field is non-Archimedean -  on the building, associated to $G$; see Kazhdan and Yom Din \cite[Theorem 1.7.(1)]{KYD} in the case the local field is non-Archimedean, see Aubert and La Rosa \cite{AubLaR} in the case the local field is Archimedean.  Kazhdan and Yom Din conjecture \cite[Conjecture 1.2]{KYD} that the  convergence is not limited to the dense subspace of $K$-finite vectors, but holds in fact on the whole space $\mathcal H$; in other words, they conjecture that Property (\ref{3}) of Proposition \ref{prop:1,2,3} is true (for the measures $\mu_n$ described above).  They prove the conjecture in the case $G=SL(2,\mathbb R)$ or $G=PSL(2,\Omega)$, with $\Omega$ a non-Archimedean local field of characteristic zero and residual characteristic different from $2$,  \cite[Theorem 1.12]{KYD}. They also prove the conjecture in the case $\pi$ is the quasi-regular representation $L^2(G/P)$ (where $P$ a minimal parabolic subgroup), \cite[Theorem 1.11]{KYD}. Asymptotic Schur's orthogonality for semisimple Lie groups has also been considered by Midorikawa \cite{Mid1}, \cite{Mid2}, and Ankabout \cite[Formula 0.2]{Ank}. Some of their statements are equivalent to Property (\ref{4}) of Proposition \ref{prop:1,2,3}, with dense subspace $V$ and length function $L$, as in the work of Kazhdan and Yom Din just mentioned, but with measures
\[
	d\mu_n(g)=f\cdot\frac{e^{-L(g)/n}}{n^d}dg,
\]
where $d$ is an integer and $f>0$.

According to Bader and Muchnik \cite{BadMuc} the natural \emph{boundary representation} $\pi$ of the fundamental group $\Gamma$ of a closed Riemannian manifold $X$, with strictly negative sectional curvature, is irreducible. (Recall that $\Gamma$ acts on the Gromov boundary $\partial \tilde{X}$ of the universal cover $\tilde{X}$ of $X$ and preserves the class of a normalized Patterson-Sullivan measure $\nu$ on $\partial \tilde{X}$. The representation $\pi$ is defined as the Koopman representation on $L^2(\partial \tilde{X},\nu)$ associated to this action.)
Garncarek has generalized this result to all non-elementary Gromov hyperbolic groups \cite[Theorem 2.1]{BoyGar}. In \cite{BoyGar}, Boyer and Garncarek prove that boundary representations (associated to Patterson-Sullivan measures and also to Gibbs streams) of non-elementary Gromov hyperbolic groups satisfy asymptotic Schur's orthogonality relations. For example, in the special case $\Gamma$ is a non elementary discrete convex cocompact group of isometries of a proper $CAT(-1)$ space  (with non-arithmetic spectrum), they show that Property (\ref{3}) of Proposition \ref{prop:1,2,3} is true for the boundary representation $\pi:\Gamma\to U(L^2(\partial \Gamma,\nu))$, and with the sequence of measures
\[
	d\mu_n(\gamma)=\frac{\mathbf{1}_{A_n}(\gamma)}{|A_n|\langle\pi(\gamma)\mathbf{1}_{\partial \Gamma},\mathbf{1}_{\partial \Gamma}\rangle^2}d\gamma,
\]
where the Haar measure $d\gamma$ is the counting measure,  $A_n=B_{n+h}\setminus B_{n-h}$ is a sequence of thick enough (i.e. $h>>1$) annuli around the identity in $\Gamma$, defined, for $n$ large enough, with the help of a left $\Gamma$-invariant metric on $\Gamma$, the cardinality of $A_n$ is denoted $|A_n|$, 
and  the diagonal coefficient $\langle\pi(\gamma)\mathbf{1}_{\partial \Gamma},\mathbf{1}_{\partial \Gamma}\rangle$ of $\pi$, defined by the constant function equal to $1$ on $\partial \Gamma$, is the analogous of the classical Harish-Chandra function of non-compact semisimple groups with finite center. An important ingredient in the proof of the main theorem of \cite{BoyGar} is that the restrictions of  the double boundary map  $\Gamma\to\partial\Gamma\times\partial\Gamma$, $g\mapsto(\hat{g},\check g)$, to the nested balls around the identity, form an equidistributed sequence relative to the product measure on the product of the boundary with itself \cite[3.1]{BoyGar}. This dynamical property of groups actions on boundaries has  been encountered and formalized by many authors, e.g. Roblin for $CAT(-1)$ spaces and groups (excluding free groups) \cite[Théorème 4.1.1]{Rob}, Boyer and Pinochet for free groups \cite[Theorem 1.1]{BoyPin}, Paulin and Pollicott and Schapira  \cite[Theorem 1.4]{Pau}, Bader and Muchnik \cite[Corollary 5.4]{BadMuc}, for manifolds of negative curvature,  Boyer and Link and Pittet for lattices in semisimple groups \cite[Lemma 3.4]{BoyLinPit}. 
All these results reveal a dynamical/geometric aspect  of Schur's orthogonality relations and motivate further the choice of  operators of the form $\pi(g)\otimes\pi(g^{-1})$ in Question \ref{general question easier}. Asymptotic Schur's orthogonality for boundary representations of finitely generated non-abelian free groups has also been considered by Kuhn and Steger \cite[Corollary 3.4]{KuhSte}, Pensavalle and Steger \cite[Theorem 1.2, Theorem 2.9]{PenSte}. Some of their statements are equivalent to Property (\ref{4}) of Proposition \ref{prop:1,2,3}, with measures
\[
	d\mu_n(\gamma)=f\cdot\frac{e^{-L(\gamma)/n}}{n}d\gamma,
\]
where $f>0$, and $L(\gamma)$ is the word length of $\gamma$ with respect to a finite symmetric generating set. 

We suspect that Property (\ref{4}) of Proposition \ref{prop:1,2,3} is true for the quasi-regular representation $\pi$ of a lattice $\Gamma$ (uniform or not) in a non-compact semisimple Lie group $G$ with finite center. We hope to elaborate on this matter in a forthcoming paper.  

A unitary representation $\pi:G\to U(\mathcal H)$ of a locally compact Hausdorff group has \emph{property RD} with respect to a length function $L$ if there is a polynomial $P(X)\in\mathbb R[X]$ such that for any $v,w\in\mathcal H$ such that $\|v\|\leq 1, \|w\|\leq 1$, any $n\in\mathbb N$,
\[
	\int_{B_n}|\langle\pi(g)v,w\rangle|^2 dg\leq P(n),
\]
where $B_n$ is the ball of radius $n$ in $G$, with center the identity, defined by the length function $L$. See \cite[Proposition 4.1]{Cha}.
Notice that if $\pi$ satisfies Property (\ref{3}) of Proposition \ref{prop:1,2,3} with
\[
d\mu_n(g)=f\cdot\frac{\mathbf{1}_{B_n}(g)}{n^d}dg,	
\] 
then it satisfies property RD: choosing $\alpha=w\otimes v$ and $\beta=v\otimes w$, with $\|v\|=\|w\|=1$, we see that
\[
	\lim_{n\to\infty}\frac{1}{n^d}\int_{B_n}|\langle\pi(g)v,w\rangle|^2 dg=\frac{1}{f},
\]
and we conclude with the help of the Banach-Steinhaus theorem.
Applying this implication together with Theorem \ref{main result}  below, one recovers the well-known fact that semisimple groups have property RD.
See \cite{ChaPitSal} and \cite{BoyCRAS} for different proofs of property RD for semisimple groups. If the quasi-regular representation of a uniform lattice $\Gamma$ in a semisimple Lie group satisfies Property (\ref{3}) of Proposition \ref{prop:1,2,3} with  
\[
d\mu_n(\gamma)=f\cdot\frac{\mathbf{1}_{B_n}(\gamma)}{n^d}d\gamma,	
\] 
where the ball $B_n$ is defined with a length function, then Valette's conjecture is true, i.e. uniform lattices in semisimple Lie groups have property RD \cite[Conjecture 7]{Val}, \cite[Conjecture 1.1]{Cha}. We don't know if Property (\ref{3}) is true for uniform lattices in higher rank, even when the Valette conjecture is verified (see \cite[page 55]{Cha}). Nota bene: the quasi-regular representation of a non-uniform lattice in a semisimple Lie group of rank at least $2$ does not satisfy property RD (the lattice contains exponentially distorted elements) \cite[Proposition 8.7]{Val}. 

The main result of this paper is that Property (\ref{2}) of Proposition \ref{prop:1,2,3} is true for semisimple groups. More precisely, consider the following setting. Let $G$ be a semisimple group over a local field. We assume that $G$ is non compact with finite center. Let  $P<G$ be a minimal parabolic subgroup and $K<G$ be a maximal compact subgroup such that $G=KP$. Let $\nu$ be the unique regular Borel probability measure on
$G/P=K/(K\cap P)$ which is $K$-invariant. Let $\pi:G\to U(L^2(G/P,\nu))$ be the quasi-regular representation.
Let $(X,d)$ be either the Riemannian symmetric space with its usual $G$-invariant metric $d$ defined by the Killing form, or - in the case the local field is non-Archimedean - the Bruhat-Tits building associated to $G$, with a $G$-invariant metric $d$, making the apartments totally geodesic and isometric to an Euclidean space.  Let $x_0\in X$ and  denote $L(g)=d(x_0,gx_0)$ the corresponding length function on $G$ and $B_n=\{g\in G: L(g)\leq n\}$. 

\begin{theo}\label{main result}(Convergence to the flip for the quasi-regular representation of a semisimple group.)
With the notation as above, there exists $d=d(G)\in\mathbb N$ and $f=f(G)>0$, such that, in the weak operator topology,
\[
	\lim_{n\to\infty}\int_G\pi(g)\otimes\pi(g^{-1})d\mu_n(g)=F,
\]
for the measures
\[
	d\mu_n(g)=f\cdot\frac{\mathbf{1}_{B_n}(g)}{n^d}dg.
\]	
In other words, for any  $\alpha,\beta\in L^2(G/P,\nu)\otimes L^2(G/P,\nu)$,
	\[
		\lim_{n\to\infty}\langle\int_{G}\pi(g)\otimes\pi(g^{-1})d\mu_n(g)\alpha,\beta\rangle=
		\langle F\alpha ,\beta\rangle.	
	\]	
\end{theo}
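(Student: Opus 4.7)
The plan is to invoke the equivalence $(1) \Leftrightarrow (2)$ of Proposition \ref{prop:1,2,3}, which reduces the theorem to two tasks: (a) the uniform operator-norm bound $\sup_n \|T_n\|_{op} < \infty$ for $T_n := \int_G \pi(g)\otimes\pi(g^{-1})\, d\mu_n(g)$, and (b) the limit
\[
\lim_{n\to\infty} \int_G \langle \pi(g)v,w\rangle \overline{\langle \pi(g)v',w'\rangle}\, d\mu_n(g) = \langle v,v'\rangle \overline{\langle w,w'\rangle}
\]
for $v,w,v',w'$ in a dense subspace of $\mathcal{H}=L^2(G/P,\nu)$.

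Task (b) is essentially off-the-shelf. Take $V \subset L^2(G/P)=L^2(K/M)$ to be the algebraic sum of $K$-isotypic components, dense by Frobenius reciprocity. Matrix coefficients of $K$-finite vectors satisfy the Harish-Chandra estimate $|\langle \pi(g)v,w\rangle| \leq C_{v,w}\Xi(g)$, and for the specific measures $d\mu_n=f\cdot\mathbf{1}_{B_n}/n^d\, dg$ the limit (b) is precisely the asymptotic Schur's orthogonality on $K$-finite vectors, established by Kazhdan and Yom~Din \cite[Theorem 1.7]{KYD} in the non-Archimedean case and by Aubert and La~Rosa \cite{AubLaR} in the Archimedean case. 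The constants $d(G)\in\mathbb{N}$ and $f(G)>0$ are pinned down by the classical volume asymptotic $\int_{B_n} \Xi(g)^2\, dg \sim n^d/f$, which follows from the Cartan decomposition together with the estimates $\Xi(\exp H) \asymp (1+\|H\|)^N e^{-\rho(H^+)}$ and $\delta(\exp H) \asymp e^{2\rho(H^+)}$ for the Harish-Chandra function and the polar Jacobian.

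The new content is task (a). A first observation is that $T_n$ is self-adjoint: the symmetry of $\mathbf{1}_{B_n}$ and the unitarity of $\pi$ give $T_n^* = T_n$ after the change of variable $g\mapsto g^{-1}$, so $\|T_n\|_{op} = \sup_{\|\alpha\|=1}|\langle T_n\alpha,\alpha\rangle|$. Bi-$K$-invariance of $\mathbf{1}_{B_n}$ forces $T_n$ to commute with the diagonal $K$-action on $\mathcal{H}\otimes\mathcal{H}$, so the analysis can be carried out isotype by isotype. The strategy is then to control the integrand $\langle(\pi(g)\otimes\pi(g^{-1}))\alpha,\alpha\rangle$ in terms of $\|\alpha\|^2\Xi(g)^2$ using the Harish-Chandra bound applied in each $K$-isotype and the explicit boundary-integral realization $(\pi(g)f)(kM)=e^{-\rho H(g^{-1}k)}f(\kappa(g^{-1}k)M)$, aiming for
\[
|\langle T_n\alpha,\alpha\rangle| \leq C\|\alpha\|^2 \cdot \frac{f}{n^d}\int_{B_n}\Xi(g)^2\, dg,
\]
uniformly bounded by the volume asymptotic above.

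The hardest step is precisely this passage from elementary-tensor control --- where Cauchy--Schwarz against the $\Xi$-bound for $K$-finite matrix coefficients is straightforward --- to a bound on the full Hilbert tensor-product operator norm of $T_n$: a pointwise bilinear estimate of the form $|\langle T_n(v\otimes w), v'\otimes w'\rangle| \leq C(v,w,v',w')\|v\|\|w\|\|v'\|\|w'\|$ does not by itself control $\|T_n\|_{op}$ on all of $\mathcal{H}\otimes\mathcal{H}$, as the natural extension gives only the projective tensor norm. Overcoming this must exploit the self-adjointness and the diagonal-$K$-equivariance of $T_n$ together with the explicit boundary kernel, most plausibly through a Schur-test type argument for the integral kernel of $T_n$ in a well-chosen $K$-equivariant basis, or through a direct spectral analysis of the operators arising in each diagonal-$K$ isotype. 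Once the uniform bound is combined with task (b), Property (1) of Proposition \ref{prop:1,2,3} is verified, and its equivalence with Property (2) yields the stated weak operator topology convergence.
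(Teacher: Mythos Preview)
Your reduction to Property (1) of Proposition \ref{prop:1,2,3} is correct, and your handling of task (b) via the Kazhdan--Yom Din / Aubert--La Rosa asymptotic orthogonality on $K$-finite vectors is a legitimate route. (In fact \cite[Theorem 1.11]{KYD} already gives Property (3) for this particular representation, so task (b) is even cheaper than you indicate. The paper takes a different, more self-contained path through Proposition \ref{Identification of the limit} and the F{\o}lner-type condition of Example \ref{Folner c-temperness}, using from \cite{KYD} only the growth estimate $\int_{B_{n+n_0}\setminus B_{n-n_0}}\Xi^2=o(\int_{B_n}\Xi^2)$; but either path closes (b).)

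The genuine gap is task (a). You correctly diagnose the obstruction---a bilinear bound on elementary tensors controls only the projective tensor norm, not the Hilbert-tensor operator norm---but your proposed remedies (a Schur test in a $K$-equivariant basis, or spectral analysis in diagonal-$K$ isotypes) are left unexecuted, and it is not at all clear how to make either one work: the diagonal-$K$ commutation you invoke decomposes $\mathcal H\otimes\mathcal H$ into infinitely many isotypes on each of which $T_n$ is still a complicated operator, and there is no evident mechanism for a uniform bound across isotypes.

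The paper's solution (Theorem \ref{theo:uniform}) uses an idea you do not mention. Realize $\mathcal H\otimes\mathcal H$ concretely as $L^2(X\times X)$ via the isometry $m(\varphi\otimes\psi)(x,y)=\varphi(x)\psi(y)$, where $X=G/P$. Under $m$, the operator $\pi(g)\otimes\pi(g^{-1})$ becomes $(\sigma(g)\varphi)(x,y)=\varphi(g^{-1}x,gy)\,c(g^{-1},x)\,c(g,y)$, which makes sense on every $L^p(X\times X)$. Since $\mu_n$ is symmetric, $T_n=\int_G\sigma(g)\,d\mu_n$ is self-adjoint, so by duality $\|T_n\|_{1\to 1}=\|T_n\|_{\infty\to\infty}$ and Riesz--Thorin gives $\|T_n\|_{2\to 2}\le\|T_n\|_{\infty\to\infty}$. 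The $L^\infty$ norm is read off pointwise: for $\|\varphi\|_\infty\le 1$,
\[
|(T_n\varphi)(x,y)|\le\int_G c(g^{-1},x)\,c(g,y)\,d\mu_n(g).
\]
The right-hand side, as a function of $(x,y)$, is $K\times K$-invariant because $\mu_n$ is bi-$K$-invariant and the cocycle is trivial on $K$; since $K$ acts transitively on $X$, it is therefore constant, equal to its integral $\int_G\Xi(g)^2\,d\mu_n(g)$. With the normalization $d\mu_n=\mathbf 1_{B_n}\big/\int_{B_n}\Xi^2$ this gives $\|T_n\|_{op}=1$ for all $n$; with your normalization $f\cdot\mathbf 1_{B_n}/n^d$ the bound is $f\,n^{-d}\int_{B_n}\Xi^2$, uniformly bounded by the volume asymptotic you already recorded. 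This is the missing ingredient.
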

We will prove that Property (\ref{1}) of Proposition \ref{prop:1,2,3} is true in the setting of the theorem (recall that Property (\ref{1}) is equivalent to (\ref{2})).
The method of the proof works in a  setting more general than the one of the theorem. For example, it also applies easily to the quasi-regular representation $\pi$ of the automorphism group of a regular tree $T$ of degree $q+1$: it implies that 
\[
	\lim_{n\to\infty}\frac{1}{n^3}\int_{B_n}\pi(g)\otimes\pi(g^{-1})dg=\frac{1}{3}\frac{(q-1)^2}{q(q+1)}F,
\]
where  $F\in U(L^2(\partial T,\nu)\otimes L^2(\partial T,\nu))$ is the flip and $\nu$ is the probability Hausdorff measure on the Gromov boundary $\partial T$, and $B_n$ is the ball of radius $n$ around the identity, defined by the length function associated to the combinatorial length distance on $T$  (in which each edge has length $1$). In Theorem \ref{theo:uniform} below, the uniform bound in Property (\ref{1}) of Proposition \ref{prop:1,2,3}, on the norms of the operators
$M_n=\int_{G}\pi(g)\otimes\pi(g^{-1})d\mu_n(g)$,
is obtained with the help of Riesz-Thorin's interpolation: we prove that 
\[
	\|M_n\|_{2\to 2}\leq\|M_n\|_{\infty\to\infty},
\]
and, in the case $G$ is semisimple or  $G=\mbox{Aut}(T)$, that $\|M_n\|_{\infty\to\infty}=1$ for all $n$. The existence of a compact subgroup $K<G$, acting transitively on the boundary is essential: if $G=\mathbb F_r$ is a non-abelian free group of finite rank $r$, acting on its Gromov boundary, and if $\pi$ is the associated boundary representation, then the corresponding operators satisfy
$$\lim_{n\to\infty}\|M_n\|_{\infty\to\infty}=\infty,$$
hence the above argument to bound uniformly $\|M_n\|_{2\to 2}$ breaks down and we do not know if Property (\ref{2}) of Proposition \ref{prop:1,2,3} is true in this case.



\section{Outline of the paper}
In Section \ref{section: compact} we briefly remind the reader the classical Schur's orthogonality relations for the coefficients of representations of compact groups: this is an absolute classic, but proving it using the braiding operator  is fun and prepares the ground for the non-compact cases. In Section \ref{section: convergence to the braiding operator} we establish an ``asymptotic Schur's lemma'' for symmetric operators, which is a variation - suitable for the braiding operator - on the concept of \emph{Folner $c$-temperness}, from Kazhdan and Yom-Din \cite{KYD}. In Section \ref{Koopman representations and cocycle averages} we study cocycles averages and apply Riesz-Thorin's interpolation to obtain a uniform bound on the norms of averages of the type $$\int_G\pi(g)\otimes\pi(g^{-1})d\mu_n,$$ when $\pi$ is a Koopmann representation. In Section \ref{putting things together} we rely on Section \ref{section: convergence to the braiding operator} and Section \ref{Koopman representations and cocycle averages} to obtain a criterion for the convergence to the braiding operator. In the last section, we rely on estimates of the Harish-Chandra function of a semisimple group, due to Kazhdan and Yom-Din \cite{KYD}, and we apply the criterion from Section \ref{putting things together} to semisimple groups over local fields, proving Theorem \ref{main result} stated in the Introduction. We also consider automorphisms of regular trees. In the appendix, we present elementary facts about closable  operators, needed in the proofs from Section \ref{section: convergence to the braiding operator}.

\emph{Acknowledgements.}
We warmly thank Uri Bader and Jean-Fran\c cois Quint for helpful discussions. We are indebted to Anne-Marie Aubert and Alfio Fabio La Rosa who pointed out an erroneous citation in a previous version of the paper and mentioned to us their work on the subject.  This work was initiated during a RIP stay of the three authors at the  Mathematisches Forschungsinstitut Oberwolfach (MFO). We are very grateful to all the people involved in the running of the MFO for providing us with wonderful working conditions. Adrien Boyer and Christophe Pittet are grateful to Enrico Leuzinger, Roman Sauer, Tobias Hartnick, Claudio Llosa Isenrich, for having invited them together at the KIT.

\section{The compact case}\label{section: compact}
In the case $G$ is compact, Question \ref{general question easier} has a neat answer which brings a short proof of the Schur's orthogonality relations for the coefficients of an irreducible representation. As a warm-up, we review the compact case: 

\begin{prop}(Braiding and Schur's orthogonality for the coefficients of an irreducible representation of a compact group.) Let  $\pi:G\to U(\mathcal H)$ be an irreducible unitary representation of a compact group.
\begin{enumerate}
	\item The dimension $d$ of $\mathcal H$ is finite.
	\item The probability Haar measure $dg$ on $G$ satisfies
	\[
	\int_{G}\pi(g)\otimes{\pi}(g^{-1})dg=\frac{1}{d}F.	
	\]
	\item For any $v,w,v',w'\in\mathcal H$,
	\[
	\int_{G}\langle\pi(g)v,w\rangle\overline{\langle\pi(g)v',w'\rangle}dg=\frac{1}{d}\langle v,v'\rangle\overline{\langle w,w'\rangle}.	
	\]
\end{enumerate}
\end{prop}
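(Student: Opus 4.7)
The plan is to establish the three claims in order, with claim (3) being essentially a rephrasing of claim (2) through formulae \eqref{equation tau} and \eqref{equation Schur}.

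For claim (1), I would use the classical averaging trick that exhibits a non-zero trace-class intertwiner. Pick a unit vector $v\in\mathcal H$, let $P$ be the rank-one orthogonal projection onto $\mathbb C v$, and set
\[
A=\int_G \pi(g)P\pi(g^{-1})\,dg.
\]
A change of variable (Haar on a compact group is conjugation-invariant) shows that $A$ commutes with every $\pi(h)$, so by Schur's lemma $A=c\cdot I$. Since $P$ is positive trace-class with $\operatorname{tr}(P)=1$ and Haar is a probability measure, $A$ is positive trace-class with $\operatorname{tr}(A)=1$. A trace-class scalar operator can exist only on a finite-dimensional space, so $d=\dim\mathcal H<\infty$.

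For claim (2), set $T=\int_G\pi(g)\otimes\pi(g^{-1})\,dg$ and consider the composition $TF$. By a direct change of variable in Haar measure, $TF$ commutes separately with $\pi(h)\otimes I$ and with $I\otimes\pi(h)$ for every $h\in G$, hence with every $\pi(h_1)\otimes\pi(h_2)$. Since $\pi$ is irreducible, so is the $G\times G$-representation $\pi\otimes\pi$ (as recalled in the introduction), and Schur's lemma forces $TF=c\cdot I_{\mathcal H\otimes\mathcal H}$ for some scalar $c$. To pin down $c$, I would compute the trace on both sides using the elementary identity
\[
\operatorname{tr}\bigl(F\cdot(A\otimes B)\bigr)=\operatorname{tr}(AB),
\]
which is verified in one line by expanding in an orthonormal basis. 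This gives
\[
\operatorname{tr}(TF)=\int_G \operatorname{tr}\bigl(\pi(g)\pi(g^{-1})\bigr)\,dg=d,
\]
while $\operatorname{tr}(c\cdot I_{\mathcal H\otimes\mathcal H})=c\cdot d^{2}$, so $c=1/d$ and $T=(1/d)F$ since $F^{2}=I$.

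Claim (3) then drops out by evaluating the identity $T=(1/d)F$ on $w'\otimes v$ and pairing with $v'\otimes w$: the left hand side becomes the integral in (3) via \eqref{equation Schur} (Haar on a compact group is symmetric), and the right hand side is $(1/d)\langle v,v'\rangle\overline{\langle w,w'\rangle}$ by \eqref{equation tau}.

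There is no serious obstacle here — the whole argument is classical — but the only conceptually non-obvious step is recognizing that one must consider $TF$ rather than $T$ itself. The operator $T$ alone only commutes with the diagonal $\pi(h)\otimes\pi(h)$, which is in general reducible on $\mathcal H\otimes\mathcal H$; composing with the flip converts the "anti-diagonal" twist built into $T$ into genuine $G\times G$-equivariance, allowing Schur's lemma to collapse $TF$ to a scalar. The only bookkeeping trap is remembering that $\operatorname{tr}(I_{\mathcal H\otimes\mathcal H})=d^{2}$, not $d$, when identifying the scalar.
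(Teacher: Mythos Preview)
Your proof is correct, and the central idea --- compose the average $T$ with the flip $F$ so that the resulting operator commutes with the full $G\times G$ action, then invoke Schur's lemma --- is exactly the paper's. The differences are in the ordering and in how you identify the constant.

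The paper does \emph{not} prove finite-dimensionality first by a separate averaging argument. Instead it runs your step (2) immediately (with $A=FM$ rather than $TF$, an immaterial difference), obtains $M=cF$ for some scalar $c$, reads off the orthogonality relation (3) with the unknown $c$, and then extracts both finite-dimensionality \emph{and} the value $c=1/d$ from that relation alone: integrating Bessel's inequality $\sum_{i=1}^n |\langle\pi(g)v,e_i\rangle|^2\leq\|v\|^2$ against Haar gives $nc\leq 1$, so $d<\infty$, and at $n=d$ Parseval turns the inequality into equality, yielding $dc=1$. Your route --- prove $d<\infty$ separately, then compute $c$ via $\operatorname{tr}(TF)=d=c\,d^2$ --- is equally valid and arguably more direct once $d<\infty$ is known, but it uses two independent arguments where the paper uses one. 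The trade-off: the paper's argument is more self-contained (the flip trick alone delivers everything), while yours makes the value of $c$ appear from a clean trace identity rather than from a Bessel/Parseval estimate.
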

\begin{proof} We denote $M=\int_{G}\pi(g)\otimes{\pi}(g^{-1})dg$ and define $A=FM$.
For any $g,h\in G$, it is obvious that $(\pi(g)\otimes{\pi}(h))F=F(\pi(h)\otimes{\pi}(g))$, and simple changes of variables show that $(\pi(g)\otimes{\pi}(h))M=M(\pi(h)\otimes{\pi}(g))$, hence $(\pi(g)\otimes{\pi}(h))A=A(\pi(g)\otimes{\pi}(h))$. Schur's lemma, applied to the irreducible representation 
$\pi\otimes\pi:G\times G\to U(\mathcal H\otimes\mathcal H)$, implies that there exists $c\in\mathbb C$ such that $A=cI$. As $F$ is an involution we obtain $M=cF$. In particular, for any $v,w,v',w'\in\mathcal H$, 
\[
	\langle M(w'\otimes v),v'\otimes w\rangle=c\langle F(w'\otimes v),v'\otimes w\rangle.
\]
In other words,
	\[
	\int_{G}\langle\pi(g)v,w\rangle\overline{\langle\pi(g)v',w'\rangle}dg=c\langle v,v'\rangle\overline{\langle w,w'\rangle}.	
	\]
Choosing $v=v'=w=w'$ of norm $1$ (we assume $\mathcal H\neq\{0\}$), we see that 
\[
	c=\int_{G}|\langle\pi(g)v,v\rangle|^2dg>0.
\]
Assume $n\in\mathbb N$ is such that there exists an orthonormal family $e_1,\dots,e_n,$ in $\mathcal H$. We choose $v\in\mathcal H\setminus\{0\}$. Integrating the inequality
\[
\sum_{i=1}^n|\langle\pi(g)v,e_i\rangle|^2\leq\|\pi(g)v\|^2=\|v\|^2,	
\]
we obtain
\begin{align*}
	nc\|v\|^2&=\sum_{i=1}^nc\langle v,v\rangle\overline{\langle e_i,e_i\rangle}
	         =\sum_{i=1}^n\int_{G}|\langle\pi(g)v,e_i\rangle|^2dg\leq \|v\|^2.
\end{align*}
This shows that $n\leq 1/c$. Hence $\mathcal H$ is of finite dimension $d$.	Choosing $n$ maximal, i.e. $n=d$, we see that the above inequalities are equalities, hence $dc=1$.
\end{proof}

\section{Convergence to the braiding operator}\label{section: convergence to the braiding operator}
Proposition \ref{Identification of the limit} and Example \ref{Folner c-temperness} below are variations, suitable for the braiding operator, on the concept of \emph{Folner $c$-temperness}, from Kazhdan and Yom-Din \cite[Proposition 2.3]{KYD}. The following obvious lemma will be applied in the course of the proof of Proposition \ref{Identification of the limit}.

\begin{lemma}\label{lemma obvious}
Let $\mathcal H$ be a complex Hilbert space. Let $U,V\in B(\mathcal H)$, and $\{M_n\}_{n>0}$ a sequence in $B(\mathcal H)$. Let $D<\mathcal H$ be a dense subspace and  $M:D\to\mathcal H$ a linear operator. We assume that $U^*$ and $V$ preserve $D$.
If $M_n$ weakly converges to $M$ on $D$, that is, for any $v,w\in D$,
\[
	\lim_{n\to\infty}\langle M_nv,w\rangle=\langle Mv,w\rangle,
\]
and if $UM_n-M_nV$ weakly converges to zero on $D$, then $UM-MV$ vanishes on $D$.
\end{lemma}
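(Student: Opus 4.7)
The plan is to unwind the definitions: to show $(UM-MV)v=0$ for every $v\in D$, I would fix $v,w\in D$ and chase $\langle (UM-MV)v,w\rangle$ through the two hypotheses. The only inputs available are (a) the weak convergence $\langle M_n x,y\rangle\to\langle Mx,y\rangle$ for $x,y\in D$, and (b) the weak vanishing $\langle(UM_n-M_nV)x,y\rangle\to 0$ for $x,y\in D$. The role of the hypotheses $U^*D\subset D$ and $VD\subset D$ is precisely to let me apply (a) after moving $U$ to the other side of the inner product, and to let me apply (a) after absorbing $V$ into the first slot.

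First, I would rewrite
\[
\langle UMv,w\rangle=\langle Mv,U^{*}w\rangle.
\]
Since $U^{*}w\in D$ by hypothesis, condition (a) applies with the pair $(v,U^{*}w)\in D\times D$, giving
\[
\langle Mv,U^{*}w\rangle=\lim_{n\to\infty}\langle M_nv,U^{*}w\rangle=\lim_{n\to\infty}\langle UM_nv,w\rangle.
\]
Similarly, since $Vv\in D$ by hypothesis, condition (a) applies to the pair $(Vv,w)\in D\times D$ and yields
\[
\langle MVv,w\rangle=\lim_{n\to\infty}\langle M_nVv,w\rangle.
\]

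Subtracting the two identities and using (b) on the pair $(v,w)\in D\times D$,
\[
\langle (UM-MV)v,w\rangle=\lim_{n\to\infty}\langle(UM_n-M_nV)v,w\rangle=0.
\]
This holds for every $w\in D$, and $D$ is dense in $\mathcal H$, so $(UM-MV)v=0$. As $v\in D$ was arbitrary, $UM-MV$ vanishes on $D$, which is the conclusion. There is no real obstacle here — the only thing to be careful about is that both sides of the inner product identities are well-defined (which is exactly what the invariance hypotheses $U^{*}D\subset D$ and $VD\subset D$ guarantee) so that (a) may be invoked with arguments in $D$.
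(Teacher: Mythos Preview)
Your proof is correct and is essentially the same as the paper's: the paper writes the single identity
\[
\langle(UM-MV)v,w\rangle=\langle(M-M_n)v,U^*w\rangle+\langle(UM_n-M_nV)v,w\rangle+\langle(M_n-M)Vv,w\rangle
\]
and lets each term go to zero, while you compute the two limits separately and subtract---the ingredients ($U^*w\in D$, $Vv\in D$, weak convergence on $D$, density of $D$) are identical.
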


\begin{proof} Let $v,w\in D$. For each $n>0$,
	\begin{align*}
	\langle(UM-MV)v,w\rangle&=\langle(M-M_n)v,U^*w\rangle+\langle(UM_n-M_nV)v,w\rangle\\
	                        &+\langle(M_n-M)Vv,w\rangle,	
	\end{align*}
and each term on the right-hand side goes to zero when $n$ goes to infinity. This shows that $(UM-MV)v\in D^{\perp}=\overline{D}^{\perp}=\mathcal H^{\perp}=\{0\}$.
\end{proof}

\begin{prop}\label{Identification of the limit}(Identification of the limit.) Let $\pi:G\to U(\mathcal H)$ be an irreducible unitary representation of a locally compact group.
Let $S\subset G$ be a generating set of $G$ (i.e. the smallest subgroup containing $S$ is $G$). Let $\{\mu_n\}_{n>0}$ be a family of finite symmetric regular Borel measures on $G$. Let $D<\mathcal H\otimes\mathcal H$ be a dense subspace preserved by $\pi\otimes\pi$. Assume the family of operators 
\[
M_n=\int_G\pi(g)\otimes\pi(g^{-1})d\mu_n(g)\in B(\mathcal H\otimes\mathcal H)
\]
weakly converges  on $D$ to an (a priori not necessary bounded) operator $M:D\to\mathcal H\otimes\mathcal H$. That is: for any $\alpha,\beta\in D$,
\[
	\lim_{n\to\infty}\langle M_n\alpha,\beta\rangle=\langle M\alpha,\beta\rangle.
\]
Assume also that for any $s\in S$, 
\[
	\int_G\pi(g)\otimes\pi(g^{-1})d\mu_n(sg)-\int_G\pi(g)\otimes\pi(g^{-1})d\mu_n(g),
\]
as well as
\[
	\int_G\pi(g)\otimes\pi(g^{-1})d\mu_n(gs)-\int_G\pi(g)\otimes\pi(g^{-1})d\mu_n(g),
\]
weakly converge  to zero on $D$. Then the operator $M$ is bounded and its unique continuous extension to $\mathcal H\otimes\mathcal H$ equals a multiple $c\cdot F$ of the braiding operator. Hence for any $\alpha,\beta\in\mathcal H\otimes\mathcal H$,
\[
\lim_{n\to\infty}\langle \int_G\pi(g)\otimes\pi(g^{-1})d\mu_n(g)\alpha,\beta\rangle=c\langle F\alpha,\beta\rangle.	
\]
In particular, if $\alpha=w'\otimes v$ and $\beta=v'\otimes w$ belong to $D$, then
\[
\lim_{n\to\infty}\int_G\langle\pi(g)v,w\rangle\overline{\langle\pi(g)v',w'\rangle}d\mu_n(g)=c\langle v,v'\rangle\overline{\langle w,w'\rangle}.
\]
\end{prop}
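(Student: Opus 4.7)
The plan is to use the two Folner-type hypotheses to show that $M$ intertwines, on $D$, the ``swap'' representation $\pi^{\mathrm{sw}}(g,h):=\pi(h)\otimes\pi(g)$ with $\pi\otimes\pi$, exactly as the flip $F$ does globally, and then to combine this intertwining with the symmetry $M_n^*=M_n$ (forced by $\check\mu_n=\mu_n$) to identify $M$ as a scalar multiple of $F$.

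A change of variable rewrites the first hypothesis as $(\pi(s^{-1})\otimes I)M_n(I\otimes\pi(s))-M_n\to 0$ weakly on $D$, and the second analogously. Multiplying on the appropriate side by the bounded unitaries $\pi(s)\otimes I$ or $I\otimes\pi(s)$---whose adjoints preserve $D$ by $(\pi\otimes\pi)$-invariance---each hypothesis is recast in the form $UM_n-M_nV\to 0$ to which Lemma~\ref{lemma obvious} applies. One obtains, for every $s\in S$ and on $D$,
\[
(\pi(s)\otimes I)M=M(I\otimes\pi(s))\quad\text{and}\quad (I\otimes\pi(s))M=M(\pi(s)\otimes I).
\]
The set of $g\in G$ for which each identity holds is a subgroup of $G$, so both identities extend from $S$ to all of $G$. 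Composing them gives $(\pi(g)\otimes\pi(h))M=M\pi^{\mathrm{sw}}(g,h)$ on $D$ for every $g,h\in G$.

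The symmetry of $\mu_n$ together with the change of variable $g\mapsto g^{-1}$ yields $M_n^*=M_n$; passing to the limit gives $\langle Mv,w\rangle=\langle v,Mw\rangle$ for $v,w\in D$, so $M$ is symmetric and hence closable, with closure $\bar M$. The intertwining of the preceding paragraph passes to $\bar M$ since the $\pi^{\mathrm{sw}}(g,h)$ are bounded and preserve $\mathrm{dom}(\bar M)$. The same change of variable combined with $F(\pi(g)\otimes\pi(g^{-1}))F=\pi(g^{-1})\otimes\pi(g)$ shows $FM_n=M_nF$ for every $n$, so $FM_n$ is self-adjoint; via the appendix's closability machinery this self-adjointness transfers to the closure of $FM$. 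Finally, combining the intertwining of $\bar M$ with $F\pi^{\mathrm{sw}}=(\pi\otimes\pi)F$ shows that $F\bar M$ commutes with the irreducible unitary representation $\pi^{\mathrm{sw}}$.

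A closed self-adjoint operator commuting with an irreducible unitary representation must be a real scalar: by functional calculus its spectral projections lie in the commutant, which by Schur's lemma is $\mathbb C I$, so each spectral projection is $0$ or $I$ and the spectrum is a single real point $c$. Hence $F\bar M=cI$ and $\bar M=cF$ globally; in particular $M=cF$ on $D$, and its unique continuous extension to $\mathcal H\otimes\mathcal H$ is $cF$, establishing the final weak convergence. The main obstacle I anticipate is the transfer of the self-adjointness relation $FM_n=M_nF$ from the bounded sequence to the closure of $FM$: $F$ need not preserve $D$ and the weak convergence of $M_n$ is only stated on $D$, so the passage from bounded to closed operators requires the appendix's careful treatment of closable operators.
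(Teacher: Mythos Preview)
Your strategy coincides with the paper's: rewrite the translation hypotheses as intertwining relations for $M_n$ via a change of variable, apply Lemma~\ref{lemma obvious} to get $(\pi(g)\otimes\pi(h))M=M(\pi(h)\otimes\pi(g))$ on $D$, use the symmetry of $\mu_n$ to see that $M$ is symmetric hence closable, pass the commutation to the closure via the appendix lemmas, and conclude by Schur. The paper writes the resulting commutation as
$(\pi(g)\otimes\pi(h))\,FM\,(\pi(g)\otimes\pi(h))^{-1}=FM$ on $D$
(this is exactly your identity once you conjugate by $F$), takes $\overline{FM}=F\overline{M}$ with Lemma~\ref{closure 1}, transports the commutation to $\overline{FM}$ with Lemma~\ref{closure 2}, and then invokes \emph{Schur's lemma for closed operators} (no self-adjointness assumption) to obtain $\overline{FM}=cI$.

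The one genuine gap in your write-up is the detour through self-adjointness. You argue that $FM_n=M_nF$ (correct), so $FM_n$ is self-adjoint (correct), and then assert that ``via the appendix's closability machinery this self-adjointness transfers to the closure of $FM$''. It does not: the appendix lemmas transfer \emph{unitary conjugation identities} to the closure, not symmetry. Concretely, to show that $FM$ is symmetric on $D$ you would need $\langle FM_n\alpha,\beta\rangle=\langle M_n\alpha,F\beta\rangle\to\langle M\alpha,F\beta\rangle$, which requires $F\beta\in D$; nothing in the hypotheses ensures that $F$ preserves $D$. You flag this obstacle yourself, but you do not resolve it.

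The fix is simply to drop the self-adjointness claim and apply the general form of Schur's lemma for densely defined closed operators: a closed operator whose domain is invariant under an irreducible unitary representation and which commutes with that representation is a scalar multiple of the identity. You already have everything needed for this: $F\overline{M}$ is closed (image of the closed graph of $\overline{M}$ under the homeomorphism $I\times F$), its domain $D(\overline{M})$ is $(\pi\otimes\pi)$-invariant (hence $\pi^{\mathrm{sw}}$-invariant) by Lemma~\ref{closure 2}, and your computation $\pi^{\mathrm{sw}}(g,h)\,F\overline{M}=F\overline{M}\,\pi^{\mathrm{sw}}(g,h)$ is correct. With that one change your argument is complete and matches the paper's.
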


\begin{proof} Let $s\in S$. We claim that 
	\[
	\lim_{n\to\infty}[(\pi(s^{-1})\otimes I)M_n-M_n(I\otimes\pi(s^{-1}))]=0	
	\]
on $D$, in the weak operator topology. In order to prove this claim, notice that the change of variable $h=s^{-1}g$ leads to the equalities
\begin{align*}
&(\pi(s^{-1})\otimes I)M_n-M_n(I\otimes\pi(s^{-1}))=\\
&\int_G\pi(s^{-1}g)\otimes \pi(g^{-1})d\mu_n(g)-M_n(I\otimes\pi(s^{-1}))=\\
&\int_G\pi(h)\otimes \pi(h^{-1})d\mu_n(sh)(I\otimes\pi(s^{-1}))-M_n(I\otimes\pi(s^{-1}))=\\
&\left[\int_G\pi(g)\otimes \pi(g^{-1})d\mu_n(sg)-M_n\right](I\otimes\pi(s^{-1})).						
\end{align*}
This proves the claim. Similarly,
	\[
	\lim_{n\to\infty}[(I\otimes\pi(s^{-1}))M_n-M_n(\pi(s^{-1})\otimes I)]=0.	
	\]
Applying Lemma \ref{lemma obvious} we deduce that on $D$, $$(\pi(s^{-1})\otimes I)M=M(I\otimes\pi(s^{-1})),$$ as well as $$(I\otimes\pi(s^{-1}))M=M(\pi(s^{-1})\otimes I).$$ As $S$ generates $G$, we deduce that for any $g,h\in G$,
\[
(\pi(g)\otimes\pi(h))M=M(\pi(h)\otimes\pi(g)),	
\]
on $D$. It is obvious that for any  $g,h\in G$,
\[
(\pi(g)\otimes\pi(h))F=F(\pi(h)\otimes\pi(g)).		
\]
Eventually we obtain, for all $g,h\in G$,
\begin{equation}\label{commutes}
(\pi(g)\otimes\pi(h))F M(\pi(g)\otimes\pi(h))^{-1}=F M,	
\end{equation}
on $D$.
As $\mu_n$ is symmetric, $M_n=M_n^*$. It follows that the limit $M$ is symmetric:
\[
	\forall \alpha,\beta\in D,\, \langle M\alpha,\beta\rangle=\langle \alpha,M\beta\rangle.
\]
Let $\overline{M}:D\left(\overline{M}\right)\to\mathcal H\otimes\mathcal H$ be the closure of $M$ (as $M$ is symmetric, the closure of its graph is again a graph - the one of $\overline{M}$). The operator  $F M$ is closable with closure $\overline{F M}=F\overline{M}$ (see Lemma \ref{closure 1} from the appendix), and its domain $D(\overline{F M})=D(\overline{M}$)  is preserved by $\pi\otimes\pi$ (see Lemma \ref{closure 2} from the appendix). For all $g,h\in G$,
\[
(\pi(g)\otimes\pi(h))\overline{F M}(\pi(g)\otimes\pi(h))^{-1}=\overline{F M},
\]
on $D(\overline{M})$ (as $\pi(g)\otimes\pi(h)$ is a unitary transformation of $\mathcal H\otimes\mathcal H$, this follows from Formula (\ref{commutes}) above and Lemma \ref{closure 2} from the appendix). As $\pi\otimes\pi$ is irreducible, Schur's Lemma for closed operators applies and shows that
$\overline{F M}=F\overline{M}$ is bounded and that its unique continuous extension to $\mathcal H\otimes\mathcal H$ equals $c\cdot I$ where $c$ is a constant. As $F$ is an involution, we eventually conclude that $M$ is bounded and that its unique continuous extension to $\mathcal H\otimes\mathcal H$ equals $c\cdot F$. 
We have just established that for any $\alpha,\beta\in D$,
\[
\lim_{n\to\infty}\langle \int_G\pi(g)\otimes\pi(g^{-1})d\mu_n(g)\alpha,\beta\rangle=c\langle F\alpha,\beta\rangle.	
\]
Applying Formulae (\ref{equation tau}) and (\ref{equation Schur}) finishes the proof. 
\end{proof}

\begin{exam}\label{Folner c-temperness}(Asymptotically invariant operators.) Let $\pi:G\to U(\mathcal H)$ be a unitary representation of a locally compact unimodular group. For each $n\in\mathbb N$, let $F_n\subset G$ be a relatively compact Borel set and $P(n)>0$ a positive number. Consider the sequence of measures
\[
	d\mu_n(g)=\frac{\mathbf{1}_{F_n}(g)}{P(n)}dg.
\]
Let $D<\mathcal H$ be a dense subspace. We denote $D\otimes_{\emph{alg}}D$ the linear span in $\mathcal H\otimes\mathcal H$ of elements of the type $v\otimes w$ with $v,w\in D$. Let $s,t\in G$. Suppose for all $v,w\in D$,
	\[
		\lim_{n\to\infty}\frac{1}{P(n)}\int_{(sF_nt)\setminus F_n}|\langle\pi(g)v,w\rangle|^2dg=0.
	\]	
Then
\[
\int_G\pi(g)\otimes\pi(g^{-1})d\mu_n(t^{-1}gs^{-1})-\int_G\pi(g)\otimes\pi(g^{-1})d\mu_n(g) 
\]
weakly converges to zero on $D\otimes_{\emph{alg}}D$.	
\end{exam}

\begin{proof} According to Formula (\ref{equation Schur}), it is enough to show that $\forall v,w,v',w'\in D$, the difference
	\[
		\int_G\langle\pi(g)v,w\rangle\overline{\langle\pi(g)v',w'\rangle}d\mu_n(t^{-1}gs^{-1})-\int_G\langle\pi(g)v,w\rangle\overline{\langle\pi(g)v',w'\rangle}d\mu_n(g)
	\]
goes to zero as $n$ goes to infinity.
As 	$\mathbf{1}_{F_n}(s^{-1}gt^{-1})=\mathbf{1}_{sF_nt}(g)$, the above difference equals 
\[
	\frac{1}{P(n)}\int_{(sF_nt)\setminus F_n}\langle\pi(g)v,w\rangle\overline{\langle\pi(g)v',w'\rangle}dg.
\]
Taking norms and applying the Cauchy-Schwarz inequality finishes the proof.
\end{proof}

\section{Koopman representations and cocycle averages}\label{Koopman representations and cocycle averages}
The goal of this section is to provide a framework for bounding operator norms of averages of the type
\[
	\int_G\pi(g)\otimes\pi(g^{-1})d\mu(g),
\]
when the unitary representation $\pi$ is a quasi-regular representation and the group $G$ has a compact subgroup $K$ acting transitively by  measure preserving transformations. Typical examples of such situations are: $G$ a semisimple group with finite center and $\pi$ the quasi-regular representation on $L^2(G/P)$ where $P$ is a minimal parabolic of $G$, or $G=\mbox{Aut}(T)$ where $T$ is a regular tree of bounded degree and   $\pi$ is the quasi-regular representation on $L^2(\partial T)$, where $\partial T$ is the Gromov boudary of $T$ with its Bourdon metric and probability Hausdorff measure.

\begin{prop}\label{prop: cocycle average} Let $G$ be a locally compact  group acting by measurable transformations on a probability space $(X,\nu)$. We assume the action preserves the class of $\nu$. Let $\mu$ be a positive Borel measure on $G$. Let
\[
	w:G\times X\to\mathbb C
\]
be a a measurable cocycle. We assume that $(g,x)\mapsto w(g^{-1},x)$ and $(g,x)\mapsto w(g^{-1},x)$ are $(\mu\otimes\nu)$-integrable, and that
$$(g,x,y)\to w(g^{-1},x)w(g,y)$$ is $(\mu\otimes\nu\otimes\nu)$-integrable. Then the following is true.
\begin{enumerate}
	\item The integral
	\[
		\Xi_w(g)=\int_Xw(g^{-1},x)d\nu(x),
	\]
	is well defined $\mu$-a.e. The integral
	\[
		\mathcal A(x,y)=\int_Gw(g^{-1},x)w(g,y)d\mu(g),
	\]
	is well defined $(\nu\otimes\nu)$-a.e.  and 
	\[
	\int_X\int_X\mathcal A(x,y)d\nu(x)d\nu(y)=\int_G\Xi_w(g)\Xi_w(g^{-1})d\mu(g).	
	\]
	\item Assume that $K<G$ is a subgroup such that the restriction of $w$ to $K$ is trivial. If $\mu$ is $K$-bi-invariant, then for any $k,l\in K$, 
$$\mathcal A(kx,ly)=\mathcal A(x,y)$$
holds $(\nu\otimes\nu)$-a.e.
	\item If moreover the action of $K$ on $X$ is transitive, then $\mathcal A$ is essentially constant, more precisely: 
	\[
	\mathcal A(x,y)=\int_G\Xi_w(g)\Xi_w(g^{-1})d\mu(g)
	\]
	holds $(\nu\otimes\nu)$-a.e.
\end{enumerate}
\end{prop}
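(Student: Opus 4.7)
My plan is to prove the three parts in sequence, with each building on the previous.

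Part (1) is a Fubini--Tonelli exercise. The $\mu\otimes\nu$-integrability of $(g,x)\mapsto w(g^{-1},x)$ gives, via Fubini, that $\Xi_w(g)=\int_X w(g^{-1},x)\,d\nu(x)$ is defined for $\mu$-a.e.\ $g$; the $\mu\otimes\nu\otimes\nu$-integrability of $(g,x,y)\mapsto w(g^{-1},x)w(g,y)$ gives $\mathcal A(x,y)$ for $(\nu\otimes\nu)$-a.e.\ $(x,y)$. To obtain the identity I would evaluate the triple integral
\[
\int_G\int_X\int_X w(g^{-1},x)w(g,y)\,d\nu(x)\,d\nu(y)\,d\mu(g)
\]
in two different orders. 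Integrating over $X\times X$ first yields $\int_G \Xi_w(g)\Xi_w(g^{-1})\,d\mu(g)$, after observing that $\int_X w(g,y)\,d\nu(y)=\Xi_w(g^{-1})$ directly from the definition. Integrating over $G$ first yields $\int_X\int_X\mathcal A(x,y)\,d\nu(x)\,d\nu(y)$.

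For part (2), the first step is to note from the cocycle identity $w(gh,x)=w(g,hx)w(h,x)$ (applied to $gh=e$) and from $w|_K\equiv1$ that $w(k^{-1},x)=1$ for all $k\in K$ as well. In the integral defining $\mathcal A(kx,ly)$ I would then substitute $g=khl^{-1}$, which preserves $\mu$ by the $K$-bi-invariance hypothesis, and use the cocycle identity to peel off the outer $K$-factors:
\[
w(g^{-1},kx)=w(lh^{-1}k^{-1},kx)=w(h^{-1},x),\qquad w(g,ly)=w(khl^{-1},ly)=w(h,y),
\]
so the integrand becomes the one defining $\mathcal A(x,y)$ and the claim follows as an a.e.\ equality in $(x,y)$.

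For part (3), the difficulty is that (2) holds only fibrewise a.e. I would lift it to a joint a.e.\ statement on $K\times K\times X\times X$ by Fubini (using measurability of the $G$-action together with the finiteness of the Haar probability measure $m_K$ on $K$), and then average in $(k,l)$ to produce
\[
\widetilde{\mathcal A}(x,y)=\int_K\int_K \mathcal A(kx,ly)\,dm_K(k)\,dm_K(l),
\]
which by (2) and Fubini equals $\mathcal A(x,y)$ $(\nu\otimes\nu)$-a.e., and which by left-invariance of $m_K$ is genuinely $K\times K$-invariant on its whole domain. Transitivity of $K$ on $X$ promotes to transitivity of $K\times K$ on $X\times X$, forcing $\widetilde{\mathcal A}$ to be constant; the value of that constant is pinned down by integrating and invoking part (1).

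The main obstacle is the bookkeeping in (2): making sure that each of $k,l,k^{-1},l^{-1}$ is eaten by a cocycle identity evaluated at the correct point of $X$. Part (3) also requires a little care about the interplay between $K$-(quasi-)invariance of $\nu$ and the $L^1$-averaging; in the intended applications (e.g.\ $X=G/P=K/(K\cap P)$ with its $K$-invariant probability measure) this is automatic, and in a quasi-invariant setup one replaces $\nu$ by the equivalent $K$-invariant measure obtained by averaging over $K$.
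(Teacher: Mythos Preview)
Your proof is correct and follows essentially the same route as the paper's. Part~(1) is handled identically via Fubini; for part~(2) the paper treats the two variables separately (substituting $s=gk$ and using right $K$-invariance of $\mu$, then a symmetric argument with left invariance), whereas your single substitution $g=khl^{-1}$ does both at once---the underlying cocycle manipulations are the same. For part~(3) the paper simply asserts that an essentially $K\times K$-invariant function on $X\times X$ with transitive action is essentially constant and equal to its integral (computed in~(1)); your averaging over $K\times K$ is one concrete way to justify that assertion, and your remark about needing $K$-invariance (or compactness of $K$) to make this rigorous is a fair observation---the paper is silent on this point, but in all the intended applications $K$ is compact and $\nu$ is $K$-invariant. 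One small redundancy: since $K$ is a subgroup, $k^{-1}\in K$ and $w(k^{-1},x)=1$ is immediate from $w|_K\equiv 1$; the cocycle-identity derivation is unnecessary.
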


\begin{proof} Point $(1)$ follows from Fubini's theorem (see for example \cite[Theorem C, Sec. 36]{Hal}). To prove $(2)$, notice first that for any $k\in K$, $g\in G$, the cocycles identities
	\[
		w(gk,x)=w(g,kx)w(k,x)=w(g,kx),
	\]
	\[
	    w(kg,x)=w(k,gx)w(g,x)=w(g,x),	
	\]
hold $\nu$-a.e. in $X$ (or everywhere if the cocycle is strict).
We choose $k\in K$ and perform the change of variable $s=gk$; using the right $K$-invariance of $\mu$, we obtain $(\nu\otimes\nu)$-a.e.:
\begin{align*}
	\int_Gw(g^{-1},x)w(g,ky)d\mu(g)&=\int_Gw(g^{-1},x)w(gk,y)d\mu(g)\\
						&=\int_Gw(ks^{-1},x)w(s,y)d\mu(s)\\
						&=\int_Gw(s^{-1},x)w(s,y)d\mu(s).
\end{align*}
This proves $K\times\{e\}$-invariance. A similar computation, using the left $K$-invariance of $\mu$, proves $\{e\}\times K$-invariance.
Point $(3)$ is easily deduced from points $(1)$ and $(2)$ because of the following obvious fact: the essentially constant function $\mathcal A(x,y)$  on the probability space $(K\times K,\nu\otimes\nu)$  is $(\nu\otimes\nu)$-a.e. equal to its integral.	
\end{proof}

The proof of the following theorem uses the above proposition. The theorem applies to representations from the principal series of semisimple groups and of automorphism groups of regular trees. In the case of quasi-regular representations (i.e. when the cocycle takes only real values) its conclusion is optimal. 

\begin{theo}\label{theo:uniform}(Uniform boundedness for symmetric means of operators associated to the principal series.) Suppose a locally compact   group $G$ acts by mesurable transformations on a probability space $(X,\nu)$. We assume that the action preserves the class of $\nu$.
For $g\in G$, we denote
$$x\mapsto c(g,x)=\sqrt{\frac{d(g^{-1}_*\nu)}{d\nu}(x)}$$ the cocycle defined by the square root of the Radon-Nikodym derivative of the action of $g^{-1}$.
Let $u:G\times X\to U(1,\mathbb C)$  be a measurable cocycle with values in the group of complex numbers of norm $1$. We assume the associated representation
$$\pi:G\to U(L^2(X,\nu))$$
$$(\pi(g)\varphi)(x)=\varphi(g^{-1}x)u(g^{-1},x) c(g^{-1},x)$$
is strongly continuous. Let $\mu$ be a finite symmetric Borel measure on $G$. We assume that $(g,x)\mapsto  c(g^{-1},x)$ and $(g,x)\mapsto  c(g,x)$ are $(\mu\otimes\nu)$-integrable, and that
$$(g,x,y)\to  c(g^{-1},x) c(g,x)$$ is $(\mu\otimes\nu\otimes\nu)$-integrable. Let
\[
	m:L^2(X,\nu)\otimes L^2(X,\nu)\to L^2(X\times X,\nu\otimes\nu)
\]
be the isomorphism of Hilbert spaces, uniquely determined by the condition $m(\varphi\otimes\psi)=\varphi\cdot\psi$, and
let
\[
	\sigma(g)=m(\pi(g)\otimes\pi(g^{-1}))m^{-1}.
\]
We assume that for $p=1,2,\infty$, 
\[
\int_G\sigma(g)d\mu(g)\in B(L^p(X\times X,\nu\otimes\nu)). 	
\]
Then the norm of the $\mu$-average of the operators $\pi(g)\otimes\pi(g^{-1})\in B(L^2(X,\nu)\otimes L^2(X,\nu))$ satisfies
\[
	\|\int_G\pi(g)\otimes\pi(g^{-1})d\mu(g)\|_{op}\leq\sup_{(x,y)}\int_Gc(g^{-1},x)c(g,y)d\mu(g).
\]
If moreover, there is a subgroup $K<G$ which acts transitively on $X$ with fixed vector $\mathbf{1}_X$ (i.e. for all $k\in K$, $\pi(k)\mathbf{1}_X=\mathbf{1}_X$), and if $\mu$ is $K$-bi-invariant,
then 
\[
	\|\int_G\pi(g)\otimes\pi(g^{-1})d\mu(g)\|_{op}\leq\int_G\Xi(g)^2d\mu(g),
\]
where 
\[
\Xi(g)=\int_X c(g^{-1},x)d\nu(x),
\]
and in the special case the cocycle $u$ is trivial (i.e. $u(g,x)=1$ holds $\mu\otimes\nu$-a.e.), then the above inequality is an equality.
\end{theo}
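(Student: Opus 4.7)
The plan is to transfer the problem, via the unitary isomorphism $m$, to an estimate on $T = \int_G \sigma(g)\,d\mu(g)$ acting on $L^2(X\times X,\nu\otimes\nu)$, and to bound $\|T\|_{2\to 2}$ by applying the Riesz-Thorin interpolation theorem between its $L^1\to L^1$ and $L^\infty\to L^\infty$ operator norms. A direct computation gives
\[
(\sigma(g)\Phi)(x,y) = u(g^{-1},x)\,u(g,y)\,c(g^{-1},x)\,c(g,y)\,\Phi(g^{-1}x,\,gy),
\]
and since $m$ is unitary, $\|T\|_{2\to 2}$ coincides with the operator norm to be bounded.

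For the $L^\infty$ bound, the triangle inequality together with $|u|\equiv 1$ yields immediately
\[
\|T\|_{\infty\to\infty}\ \leq\ \sup_{(x,y)}\int_G c(g^{-1},x)\,c(g,y)\,d\mu(g).
\]
For the $L^1$ bound, I would apply Fubini and perform the change of variables $u = g^{-1}x$, $v = gy$. The Radon-Nikodym derivatives give $d\nu(x) = c(g^{-1},x)^{-2}d\nu(u)$ and $d\nu(y) = c(g,y)^{-2}d\nu(v)$; combining with the cocycle identity $c(g^{-1},gu) = c(g,u)^{-1}$ (and its analogue in $v$) rearranges the integrand to $|\Phi(u,v)|\,c(g,u)\,c(g^{-1},v)$. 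The symmetry of $\mu$ then allows one to substitute $g \mapsto g^{-1}$ inside the resulting integral and to conclude that $\|T\|_{1\to 1}$ admits the same bound as $\|T\|_{\infty\to\infty}$. Riesz-Thorin now yields
\[
\|T\|_{2\to 2}\ \leq\ \|T\|_{1\to 1}^{1/2}\|T\|_{\infty\to\infty}^{1/2}\ \leq\ \sup_{(x,y)}\int_G c(g^{-1},x)\,c(g,y)\,d\mu(g),
\]
which is the first claim of the theorem.

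For the second part, the hypothesis $\pi(k)\mathbf{1}_X = \mathbf{1}_X$ forces $u(k,\cdot) c(k,\cdot) = 1$ a.e., whence $c(k,\cdot) = 1$ since $|u|=1$ and $c>0$. Thus $c$ is trivial on $K$, and Proposition \ref{prop: cocycle average} applies with $w = c$, showing that $\mathcal A(x,y) = \int_G c(g^{-1},x)c(g,y)\,d\mu(g)$ is essentially constant, equal to $\int_G \Xi(g)\Xi(g^{-1})\,d\mu(g)$. The Cauchy-Schwarz inequality in $L^2(G,\mu)$ together with the symmetry of $\mu$ (which gives $\int \Xi(g^{-1})^2 d\mu = \int \Xi(g)^2 d\mu$) yields $\int_G \Xi(g)\Xi(g^{-1})\,d\mu(g)\leq \int_G \Xi(g)^2 d\mu(g)$, establishing the stated upper bound. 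For the equality claim when $u$ is trivial, one observes that $\Xi(g) = \langle\pi(g)\mathbf{1}_X,\mathbf{1}_X\rangle$ is real, so $\Xi(g^{-1}) = \overline{\Xi(g)} = \Xi(g)$, making Cauchy-Schwarz an equality; testing $T$ on the unit vector $\mathbf{1}_{X\times X}$ then produces the matching lower bound $\langle T \mathbf{1}_{X\times X}, \mathbf{1}_{X\times X}\rangle = \int_G \Xi(g)^2 d\mu(g)$, forcing equality throughout. The main technical obstacle is executing the change of variables for the $L^1$ bound so that the symmetry of $\mu$ produces a supremum genuinely matching the $L^\infty$ estimate; this requires careful bookkeeping of the cocycle identity for $c$ at the shifted points $gu$ and $g^{-1}v$.
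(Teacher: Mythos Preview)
Your argument is correct and follows the same overall strategy as the paper: conjugate by $m$, bound $\|T\|_{2\to2}$ via Riesz--Thorin, and compute the $L^\infty$ bound on $\sigma(g)$ directly. Two small differences are worth noting. First, for the $L^1$ bound the paper does not perform your change of variables at all: since $\mu$ is symmetric the operator $T=\int_G\sigma(g)\,d\mu(g)$ is self-adjoint on $L^2$, whence by $L^1$--$L^\infty$ duality $\|T\|_{1\to1}=\|T^*\|_{\infty\to\infty}=\|T\|_{\infty\to\infty}$, and Riesz--Thorin immediately gives $\|T\|_{2\to2}\leq\|T\|_{\infty\to\infty}$. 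Your direct computation reaches the same conclusion but is more laborious. Second, your Cauchy--Schwarz step passing from $\int_G\Xi(g)\Xi(g^{-1})\,d\mu(g)$ to $\int_G\Xi(g)^2\,d\mu(g)$ is unnecessary: a change of variables $y=gx$ together with the cocycle identity $c(g,x)=c(g^{-1},gx)^{-1}$ shows that $\Xi(g^{-1})=\int_X c(g,x)\,d\nu(x)=\int_X c(g^{-1},y)\,d\nu(y)=\Xi(g)$ for every $g$, independently of $u$, so Proposition~\ref{prop: cocycle average} already gives the constant $\int_G\Xi(g)^2\,d\mu(g)$ directly.
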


\begin{proof} The operator $\int_G\pi(g)\otimes\pi(g^{-1})d\mu(g)\in B(L^2(X,\nu)\otimes L^2(X,\nu))$ is self-adjoint because $\mu$ is symmetric.  Hence, its conjugate by the isometry $m$, 
is self-adjoint and belongs to $B(L^p(X\times X,\nu\otimes\nu))$ for $p=1,2,\infty$. Riesz-Thorin's interpolation theorem implies
\[
	\|\int_G\sigma(g)d\mu(g)\|_{2\to 2}\leq\|\int_G\sigma(g)d\mu(g)\|_{\infty\to\infty}.
\]
For any $\varphi\in L^{\infty}(X\times X,\nu\otimes\nu)$ such that $\|\varphi\|_{\infty}=1$, we have ($\sup$ means the essential supremum),
\begin{align*}
&\sup_{(x,y)}|\left(\int_G\sigma(g)\varphi d\mu(g)\right)(x,y)|=\\
&\sup_{(x,y)}|\int_G\varphi(g^{-1}x,gy)u(g^{-1},x)c(g^{-1},x)u(g,y)c(g,y)d\mu(g)|\leq\\	
&\sup_{(x,y)}|\int_G|\varphi(g^{-1}x,gy)||u(g^{-1},x)|c(g^{-1},x)|u(g,y)|c(g,y)d\mu(g)|\leq\\	
&\sup_{(x,y)}\int_Gc(g^{-1},x)c(g,y)d\mu(g).	
\end{align*}
Under the hypothesis of the existence of $K$, applying Proposition \ref{prop: cocycle average}, we deduce that the function
\[
	(x,y)\mapsto \int_Gc(g^{-1},x)c(g,y)d\mu(g)
\]
is essentially constant equal to $\int_G\Xi(g)^2d\mu_(g)$.
In the case $u$ is trivial, taking $\varphi=\mathbf{1}_{X\times X}$ we get $\nu\otimes\nu$-a.e.
\[
	\left(\int_G\sigma(g)\mathbf{1}_{X\times X}d\mu(g)\right)(x,y)=\int_G\Xi(g)^2d\mu(g).
\]	
\end{proof}


	
\section{Putting things together}\label{putting things together}
Recall that a length function $L:G\to[0,\infty[$ on a locally compact group is a Borel map such that $L(e)=0$, and for all $g,h\in G$, $L(g^{-1})=L(g)$, $L(gh)\leq L(g)+L(h)$.
We assume the inverse image by $L$ of any bounded subset is relatively compact. We denote $$B_n=L^{-1}\left([0,n]\right)$$ the associated ball of radius $n>0$ around the identity.

The following corollary is a consequence of Theorem \ref{theo:uniform} combined with Proposition \ref{Identification of the limit}.

\begin{coro}\label{coro}(Convergence to the braiding operator for irreducible Koopman representations of unimodular groups with a $c$-tempered Folner sequence defined by a $K$-bi-invariant length function.) Suppose a locally compact unimodular group $G$ acts by measurable transformations on a probability space $(X,\nu)$. We assume that the action preserves the class of $\nu$.
For $g\in G$, we denote
$$x\mapsto c(g,x)=\sqrt{\frac{d(g^{-1}_*\nu)}{d\nu}(x)}$$ the cocycle defined by the square root of the Radon-Nikodym derivative of the action of $g^{-1}$.
We assume the associated Koopman representation
$$\pi:G\to U(L^2(X,\nu))$$
$$(\pi(g)\varphi)(x)=\varphi(g^{-1}x)c(g^{-1},x)$$
is strongly continuous. 
Let 
$$\Xi(g)=\langle\pi(g)\mathbf{1}_X,\mathbf{1}_X\rangle=\int_X c(g^{-1},x)d\nu(x).$$
Suppose a subgroup $K<G$ acts transitively on $X$. Suppose $K$ fixes $\mathbf{1}_X$ (in other words, $K$ preserves $\nu$).
Let $$L:G\to[0,\infty[$$ be a length function on $G$ which is $K$-bi-invariant.  Assume for each $n_0>0$
\[
\lim_{n\to\infty}\frac{\int_{B_{n+n_0}\setminus B_{n-n_0}}\Xi(g)^2dg}{\int_{B_n}\Xi(g)^2dg}=0.	
\]
Let $$d\mu_n(g)=\frac{\mathbf{1}_{B_n}(g)}{\int_{B_n}\Xi(g)^2dg}dg.$$ If $\pi$ is irreducible then
	\[
	\lim_{n\to\infty}\int_{G}\pi(g)\otimes{\pi}(g^{-1})d\mu_n(g)=F.	
	\]
\end{coro}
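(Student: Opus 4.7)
The plan is to combine Theorem \ref{theo:uniform} (for a uniform operator norm bound on the averages $M_n := \int_G \pi(g) \otimes \pi(g^{-1}) d\mu_n(g)$), Example \ref{Folner c-temperness} (for asymptotic invariance of the measures $\mu_n$), and the Schur-type mechanism underlying Proposition \ref{Identification of the limit} (to identify the limit as $F$). For the uniform bound: $K$-bi-invariance of $L$ combined with unimodularity of $G$ makes $\mu_n$ $K$-bi-invariant, so Theorem \ref{theo:uniform} applied with trivial unitary cocycle $u \equiv 1$ yields $\|M_n\|_{op} = \int_G \Xi(g)^2 d\mu_n(g) = 1$ by the very normalization of $\mu_n$.

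For the asymptotic invariance, take the dense subspace $D_0 := L^\infty(X, \nu) \subset L^2(X,\nu)$ (dense because $\nu$ is a probability measure). The explicit Koopman formula together with positivity of $c$ gives $|\langle \pi(g) v, w \rangle| \leq \|v\|_\infty \|w\|_\infty \Xi(g)$ for $v, w \in D_0$, and the triangle inequality for $L$ yields $(s B_n t) \triangle B_n \subset B_{n+n_0} \setminus B_{n-n_0}$ with $n_0 := L(s) + L(t)$. Combining with the Harish--Chandra hypothesis of the corollary gives, for all $s, t \in G$ and $v, w \in D_0$,
\[
\lim_{n \to \infty}\frac{1}{\int_{B_n}\Xi^2 dg}\int_{(sB_n t) \setminus B_n} |\langle \pi(g) v, w\rangle|^2 dg = 0.
\]
Example \ref{Folner c-temperness} then yields weak convergence to zero, on $D_0 \otimes_{\emph{alg}} D_0$, of $\int \pi(g) \otimes \pi(g^{-1}) d\mu_n(sg) - M_n$ and of its right-translation counterpart, for every $s \in G$.

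The main subtlety is that $D_0$ is not $\pi(G)$-invariant (the Radon--Nikodym factor $c(g^{-1},x)$ need not be essentially bounded), so Proposition \ref{Identification of the limit} cannot be applied as a black box. Weak-operator compactness substitutes for it: by Banach--Alaoglu, any subnet of $(M_n)$ in the WOT-compact unit ball of $B(L^2(X,\nu) \otimes L^2(X,\nu))$ has a further subnet WOT-converging to some $M_\infty$. The change-of-variable identity
\[
\int \pi(g) \otimes \pi(g^{-1}) d\mu_n(sg) = (\pi(s^{-1}) \otimes I) M_n (I \otimes \pi(s)),
\]
together with WOT-continuity of left and right multiplication by fixed bounded operators, passes the vanishing above to the relation $(\pi(s^{-1}) \otimes I) M_\infty (I \otimes \pi(s)) = M_\infty$ tested on $D_0 \otimes_{\emph{alg}} D_0$; boundedness of $M_\infty$ and density of $D_0 \otimes_{\emph{alg}} D_0$ extend this equality to all of $L^2(X,\nu) \otimes L^2(X,\nu)$. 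The right-translation counterpart gives $(I \otimes \pi(s)) M_\infty (\pi(s^{-1}) \otimes I) = M_\infty$. Together these imply $(\pi(g) \otimes \pi(h)) M_\infty = M_\infty (\pi(h) \otimes \pi(g))$ for all $g, h \in G$, so $F M_\infty$ commutes with $(\pi \otimes \pi)(G \times G)$. Irreducibility of $\pi$ and Schur's lemma then give $M_\infty = c F$; testing on $\mathbf{1}_X \otimes \mathbf{1}_X \in D_0 \otimes_{\emph{alg}} D_0$ yields $c = \lim_n \int \Xi^2 d\mu_n = 1$. As every subnet limit equals $F$, Hausdorffness of the WOT on the compact unit ball gives $M_n \to F$ in the weak operator topology.
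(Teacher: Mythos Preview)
Your proof is correct and shares the paper's overall architecture: uniform bound via Theorem~\ref{theo:uniform}, weak-operator compactness of the unit ball, asymptotic translation-invariance of the $\mu_n$, and Schur's lemma to identify the limit. The genuine difference lies in how the asymptotic-invariance estimate is obtained.

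The paper applies Theorem~\ref{theo:uniform} a \emph{second} time, to the symmetric $K$-bi-invariant annulus measure $\mathbf{1}_{B_{n+n_0}\setminus B_{n-n_0}}\,dg$, and reads off
\[
\int_{B_{n+n_0}\setminus B_{n-n_0}}|\langle\pi(g)v,w\rangle|^2\,dg
=\bigl\langle\!\int_{B_{n+n_0}\setminus B_{n-n_0}}\pi(g)\otimes\pi(g^{-1})\,dg\,(w\otimes v),\,v\otimes w\bigr\rangle
\le\int_{B_{n+n_0}\setminus B_{n-n_0}}\Xi(g)^2\,dg\cdot\|v\|^2\|w\|^2
\]
for \emph{all} $v,w\in L^2(X,\nu)$. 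This makes Example~\ref{Folner c-temperness} available with $D=L^2(X,\nu)$, so the $\pi\otimes\pi$-invariant domain $L^2\otimes_{\text{alg}}L^2$ feeds straight into Proposition~\ref{Identification of the limit} as a black box; no workaround is needed.

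Your route uses instead the pointwise bound $|\langle\pi(g)v,w\rangle|\le\|v\|_\infty\|w\|_\infty\,\Xi(g)$, valid only for $v,w\in L^\infty$. Since $L^\infty$ is not $\pi(G)$-invariant, you correctly bypass Proposition~\ref{Identification of the limit} and argue directly that any bounded WOT-limit $M_\infty$ satisfies the intertwining relations (first on $L^\infty\otimes_{\text{alg}}L^\infty$, then everywhere by density and boundedness), before invoking Schur. This is slightly more elementary in the estimate step but pays for it by reproducing part of the Schur mechanism by hand. The paper's double use of Theorem~\ref{theo:uniform} is the cleaner manoeuvre and is worth noting: it shows that the operator-norm bound already encodes the pointwise control you extracted from $L^\infty$.
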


\begin{proof} It is enough to check that Property $(1)$ in Proposition \ref{prop:1,2,3} is verified. The uniform bound on the operator norm is true: for any $n>0$, the measure $\mu_n$ is symmetric and $K$-bi-invariant, hence Theorem \ref{theo:uniform}  implies that
	\[
	\|\int_{G}\pi(g)\otimes{\pi}(g^{-1})d\mu_n(g)\|_{op}=\int_{G}\Xi(g)^2d\mu_n(g),
	\]
and, by definition of $\mu_n$,
\[
\int_{G}\Xi(g)^2d\mu_n(g)=1.	
\]
In order to establish Schur's asymptotic relations, requested in Property $(1)$ from Proposition \ref{prop:1,2,3}, it is enough to check that the hypothesis of Proposition \ref{Identification of the limit} are fulfilled. To start with, $\pi$ is irreducible, by assumption. Next, a closed ball in $B(L^2(X,\nu)\otimes L^2(X,\nu))$ is compact in the weak operator topology. Hence, in order to show that the 
sequence
\[
	M_n=\int_{G}\pi(g)\otimes{\pi}(g^{-1})d\mu_n(g)
\]
converges to $F$, it is enough to show that any of its convergent subsequence converges to $F$.
So we may assume that $M_n$ converges to some operator $M\in B(L^2(X,\nu)\otimes L^2(X,\nu))$ and we have to prove that $M=F$. We check the remaining  hypothesis of Proposition \ref{Identification of the limit}: as generating set, we choose the group itself $S=G$, and as $\pi\otimes\pi$-invariant dense domain, we choose $L^2(X,\nu)\otimes_{\emph{alg}}L^2(X,\nu)$. Let $s,t\in G$.
In order to finish the proof, it is enough to show that
\[
\int_G\pi(g)\otimes\pi(g^{-1})d\mu_n(t^{-1}gs^{-1})-\int_G\pi(g)\otimes\pi(g^{-1})d\mu_n(g) 
\]
weakly converges to zero on $L^2(X,\nu)\otimes_{\emph{alg}}L^2(X,\nu)$. And, according to Example \ref{Folner c-temperness}, this convergence holds, as soon as for any $v,w\in L^2(X,\nu)$,
\[
\lim_{n\to\infty}\frac{1}{\int_{B_n}\Xi(g)^2dg}\int_{(sB_nt)\setminus B_n}|\langle\pi(g)v,w\rangle|^2dg=0.
\]		
In order to check that this limit is indeed zero, let $n_0=2\max\{L(s);L(t)\}$. We have: 
\[
(sB_nt)\setminus B_n\subset B_{n+n_0}\setminus B_{n-n_0}.		
\]
Hence:
\begin{align*}
\int_{(sB_nt)\setminus B_n}|\langle\pi(g)v,w\rangle|^2dg&\leq\int_{B_{n+n_0}\setminus B_{n-n_0}}|\langle\pi(g)v,w\rangle|^2dg\\
            &=\langle\int_{B_{n+n_0}\setminus B_{n-n_0}}\pi(g)\otimes\pi(g^{-1})dg(w\otimes v),v\otimes w\rangle\\ 
			&\leq\|\int_{B_{n+n_0}\setminus B_{n-n_0}}\pi(g)\otimes\pi(g^{-1})dg\|_{op}\|v\otimes w\|^2\\
			&=\int_{B_{n+n_0}\setminus B_{n-n_0}}\Xi(g)^2dg\,\|v\otimes w\|^2,\\
			\end{align*}
where the last equality follows from Theorem \ref{theo:uniform} applied to the symmetric $K$-bi-invariant measure
\[
	\mathbf{1}_{B_{n+n_0}\setminus B_{n-n_0}}(g)dg.
\]
\end{proof}

\section{Examples}

\subsection{Automorphism groups of regular trees}
Our aim is to apply Corollary \ref{coro} to the automorphism group of a regular tree and its associated boundary representation.
Let $(T,d)$ be the regular tree of degree $q+1$ equipped with its geodesic path metric $d$ for which each edge is isometric to the unit interval $[0,1]\subset\mathbb R$. Let $x_0$ be a vertex of $T$. Let $\partial T$ be its boundary at infinity (see \cite[Appendix]{PLP} for more details and references).  Let $b\in \partial T$ and let $\beta:[0,\infty)\to T$ be a geodesic ray representing $b$. Let $x, y\in T$. Let
\[
	B_b(x,y)=\lim_{t\to\infty}[d(x,\beta(t))-d(y,\beta(t))],
\]
be the Busemann cocycle defined by $b\in\partial T$.
Let $a, b\in\partial T$ and let $\alpha,\beta:[0,\infty)\to T$ be geodesic rays representing $a$ and $b$. Their Gromov product relative to the base point $x_0$ is defined as
\[
	(a|b)_{x_0}=\frac{1}{2}\lim_{t\to\infty}[d(x_0,\alpha(t))+d(x_0,\beta(t))-d(\alpha(t),\beta(t))].
\]
The formula
\[
	d_{x_0}(a,b)=e^{-(a|b)_{x_0}}
\]
defines an ultra-metric on $\partial T$. The group $\mbox{Aut}(T)$ of isometries of $T$ is locally compact unimodular and acts on $\partial T$ by conformal transformations.
The Hausdorff dimension of $(\partial T,d_{x_0})$ equals $\log q$ and the normalized Hausdorff measure $\nu$ on $(\partial T,d_{x_0})$ is the unique regular Borel
probability measure on $\partial T$ invariant under the action of the stabilizer $K=\mbox{Aut}(T)_{x_0}$ of $x_0$. The Radon-Nikodym derivative of $g\in \mbox{Aut}(T)$
at $b\in\partial T$ is
\[
	\frac{dg_*\nu}{d\nu}(b)=q^{B_b(x_0,gx_0)}.
\]
We denote   $\pi:\mbox{Aut}(T)\to U(L^2(\partial T,\nu))$ the corresponding Koopman representation.
Let $\mathbf{1}_{\partial T}\in L^2(\partial T,\nu)$ be the constant function equal to $1$. 
The Harish-Chandra function 
\[
	\Xi:\mbox{Aut}(T)\to(0,\infty)
\]
is the coefficient of $\pi$ defined by $\mathbf{1}_{\partial T}$ that is:
\[
\Xi(g)=\langle\pi(g)\mathbf{1}_{\partial T},\mathbf{1}_{\partial T}\rangle.
\]
As the action of $K$ preserves the measure and as $\pi$ is unitary, the Harish-Chandra function is $K$-bi-invariant and symmetric.
The length function
\[
	L:\mbox{Aut}(T)\to\mathbb N\cup\{0\}
\]
\[
	L(g)=d(x_0,gx_0)
\]
is also $K$-bi-invariant and symmetric (notice that the elements of length $0$ are the elements of $K$). As $K$ acts transitively on each sphere of $T$ with center $x_0$, if $g,g'\in \mbox{Aut}(T)$ satisfy $L(g)=L(g')$ then there exist $k,k'\in K$ such that $kg=g'k'$. This implies that $\Xi$ is constant on the level sets of $L$. For each $n\in\mathbb N\cup\{0\}$,
we will write $\Xi(n)$ for the common value of the Harish-Chandra function  on all $g\in \mbox{Aut}(T)$ such that $L(g)=n$. 
For any $n\in\mathbb N\cup\{0\}$,
\begin{align}\label{formula: Harish-Chandra}
\Xi(n)=\left(1+\frac{q-1}{q+1}n\right)q^{-n/2}.
\end{align}
Formula \ref{formula: Harish-Chandra}  implies that
\[
\int_{B_n}\Xi(g)^2dg\sim\frac{1}{3}\frac{(q-1)^2}{q(q+1)}n^3.	
\]
Hence the hypothesis of Corollary \ref{coro} on the behavior of $\Xi(g)$ is fulfilled. As $\pi$ is irreducible (see for example \cite{Figa} or \cite{BoyPin}), Corollary \ref{coro} implies that
\[
	\lim_{n\to\infty}\frac{1}{\int_{B_n}\Xi(g)^2dg}\int_{B_n}\pi(g)\otimes\pi(g^{-1})dg=F.
\]
Or:
\[
	\lim_{n\to\infty}\frac{1}{n^3}\int_{B_n}\pi(g)\otimes\pi(g^{-1})dg=\frac{1}{3}\frac{(q-1)^2}{q(q+1)}F.
\]
Applying Formulae (\ref{equation tau}) and (\ref{equation Schur}), we deduce that 
\[
	\lim_{n\to\infty}\frac{1}{n^3}\int_{B_n}\langle\pi(g)v,w\rangle\overline{\langle\pi(g)v',w'\rangle}dg=\frac{1}{3}\frac{(q-1)^2}{q(q+1)}\langle v,v'\rangle\overline{\langle w,w'\rangle},	
	\]
for any $v,w,v',w'\in L^2(\partial T,\nu)$.	
\subsection{Semisimple groups}
Let $G$ be a semisimple group over a local field. We assume that $G$ is non compact with finite center. Let  $P<G$ be a minimal parabolic subgroup and $K<G$ be a maximal compact subgroup such that $G=KP$. Let $\nu$ be the unique regular Borel probability measure on
$G/P=K/(K\cap P)$ which is $K$-invariant. Let $\pi:G\to U(L^2(G/P,\nu))$ be the quasi-regular representation on $\mathcal H=L^2(G/P,\nu)$ (see for example \cite[Example E.1.8 (ii), Definition B.1.9, Section A.6]{BDV}).
Let $(X,d)$ be either the Riemannian symmetric space with its usual $G$-invariant metric $d$ defined by the Killing form, or - in the case the local field is non-Archimedean - the Bruhat-Tits building associated to $G$ with a $G$-invariant metric $d$ making the apartments totally geodesic and isometric to Euclidean spaces.  Let $x_0\in X$ and  denote $L(g)=d(x_0,gx_0)$ the associated length function on $G$. All the hypothesis of Corollary \ref{coro} are obvious to check or well-known, except the condition on the growth of the Harish-Chandra function, which is the subject of \cite[Lemma 6.1]{KYD} (the length function $L$ corresponds to the radius function $r$ from \cite[1.3]{KYD}, see \cite[2.3 Décomposition de Cartan]{Qui} for details and references in the non-Archimedean case). We deduce that
\[
	\lim_{n\to\infty}\frac{1}{\int_{B_n}\Xi(g)^2dg}\int_{B_n}\pi(g)\otimes\pi(g^{-1})dg=F.
\]
Notice that \cite[Theorem 1.7]{KYD} implies that there exists an integer $d$ and a strictly positive number $f$ such that
\[
\int_{B_n}\Xi(g)^2dg\sim \frac{n^d}{f}.	
\]
Hence we also have:
\[
	\lim_{n\to\infty}\frac{1}{n^d}\int_{B_n}\pi(g)\otimes\pi(g^{-1})dg=\frac{1}{f}F.
\]
\section{Appendix}

\begin{lemma}\label{closure 1} Let $\mathcal H$ be a complex Hilbert space. Let $D<\mathcal H$ be a dense subspace and $M:D\to\mathcal H$ be a closable operator.
	Let $U:\mathcal H\to\mathcal H$ be a unitary transformation. Then $UM$ is closable and its closure is the closure of $M$ post-composed with $U$:
	\[
		\overline{UM}=U\overline{M}.
	\]
\end{lemma}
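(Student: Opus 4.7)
The plan is to work at the level of graphs in $\mathcal H \oplus \mathcal H$, which is the most transparent setting for questions about closability and closure of linear operators. Recall that an operator $T$ with domain $D$ is closable iff the closure of its graph $G(T) = \{(v, Tv) : v \in D\} \subset \mathcal H \oplus \mathcal H$ is again the graph of a (necessarily unique) operator, which one then calls $\overline T$, characterized by $G(\overline T) = \overline{G(T)}$.

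The key observation is that the map
\[
\Phi_U : \mathcal H \oplus \mathcal H \to \mathcal H \oplus \mathcal H, \qquad (v, w) \mapsto (v, Uw),
\]
is a unitary isomorphism, hence in particular a homeomorphism. A direct inspection gives $G(UM) = \Phi_U(G(M))$, and since $\Phi_U$ is a homeomorphism, $\overline{G(UM)} = \Phi_U(\overline{G(M)})$.

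From here everything follows mechanically. First, I would verify that $UM$ is closable: suppose $(0, y) \in \overline{G(UM)} = \Phi_U(\overline{G(M)})$; then $(0, U^{-1}y) \in \overline{G(M)}$, and since $M$ is closable this forces $U^{-1}y = 0$, hence $y = 0$. Thus $\overline{G(UM)}$ is a graph, so $UM$ is closable. Next, I would identify the closure explicitly: the description $\overline{G(UM)} = \Phi_U(\overline{G(M)}) = \{(v, U\overline M v) : v \in D(\overline M)\}$ is exactly the graph of the operator $U\overline M$ with domain $D(\overline M)$. Hence $\overline{UM} = U\overline M$ as operators, with matching domain.

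The main (minor) obstacle is purely bookkeeping, namely making sure that the domains coincide and that one does not silently enlarge or shrink the domain when composing with $U$. Because $U$ is everywhere defined and bounded, post-composition by $U$ does not alter the domain, so $D(U\overline M) = D(\overline M)$, and the equality $\overline{UM} = U\overline M$ holds as an equality of operators (domain and action).
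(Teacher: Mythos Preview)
Your proof is correct and follows essentially the same approach as the paper: both use the unitary map $(v,w)\mapsto(v,Uw)$ on $\mathcal H\oplus\mathcal H$ to transport $\Gamma(M)$ to $\Gamma(UM)$, and then invoke that a homeomorphism commutes with taking closures. Your explicit check that $(0,y)\in\overline{G(UM)}$ forces $y=0$ is a slight elaboration of what the paper leaves implicit, but the argument is otherwise identical.
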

\begin{proof} We consider the isomorphism of Hilbert spaces
	\[
		\mathcal H\times\mathcal H\xrightarrow{I\times U}\mathcal H\times\mathcal H
	\]
	\[
		(x,y)\mapsto(x,U(y)).
	\]
The graph of $M$ is obviously sent to the graph of $UM$:
\[
	(I\times U)\Gamma(M)=\Gamma(UM).
\]
As $M$ is closable, the closure of its graph is the graph of its closure:
\[
\overline{\Gamma(M)}=\Gamma(\overline{M}).	
\]	
As $I\times U$ is an homeomorphism of $\mathcal H\times\mathcal H$,
\begin{align*}
	\overline{\Gamma(UM)}&=\overline{(I\times U)\Gamma(M)}=(I\times U)\overline{\Gamma(M)}\\
						 &=(I\times U)\Gamma(\overline{M})=\Gamma(U\overline{M}).
\end{align*}
This proves that $UM$ is closable with closure $U\overline{M}$.	
\end{proof}

\begin{lemma}\label{closure 2} Let $\mathcal H$ be a complex Hilbert space. Let $D<\mathcal H$ be a dense subspace and $M:D\to\mathcal H$ be a closable operator.
	Let $U,V:\mathcal H\to\mathcal H$ be unitary transformations. Assume that $V$ preserves $D$ and $UMV=M$. Then $V$ preserves the domain of the closure of $M$, i.e. $VD(\overline{M})<D(\overline{M})$ and $U\overline{M}V=\overline{M}$.
\end{lemma}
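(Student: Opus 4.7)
The plan is to mimic the graph-theoretic approach of Lemma \ref{closure 1}: encode the hypothesis $UMV=M$ as an invariance property of the graph $\Gamma(M)\subset\mathcal H\times\mathcal H$ under a suitable homeomorphism, then propagate this invariance to the graph of the closure by continuity.

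First I would rewrite the identity: since $U$ is unitary, $UMV=M$ on $D$ is equivalent to $MV=U^{-1}M$ on $D$. Because $V$ preserves $D$, for every $x\in D$ the pair $(Vx,MVx)=(Vx,U^{-1}Mx)$ belongs to $\Gamma(M)$. Introducing the homeomorphism
\[
\Phi=V\times U^{-1}:\mathcal H\times\mathcal H\to\mathcal H\times\mathcal H,\qquad (x,y)\mapsto(Vx,U^{-1}y),
\]
this reads $\Phi(\Gamma(M))\subset\Gamma(M)$.

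Next I would pass to closures. As $M$ is closable, $\overline{\Gamma(M)}=\Gamma(\overline{M})$. Because $\Phi$ is a homeomorphism of $\mathcal H\times\mathcal H$, it commutes with taking closures, so
\[
\Phi(\Gamma(\overline{M}))=\Phi\bigl(\overline{\Gamma(M)}\bigr)=\overline{\Phi(\Gamma(M))}\subset\overline{\Gamma(M)}=\Gamma(\overline{M}).
\]
Unpacking this inclusion gives exactly the conclusion: for any $x\in D(\overline{M})$, the pair $(Vx,U^{-1}\overline{M}x)$ lies in $\Gamma(\overline{M})$, which means $Vx\in D(\overline{M})$ and $\overline{M}(Vx)=U^{-1}\overline{M}x$, i.e.\ $U\overline{M}Vx=\overline{M}x$.

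There is no real obstacle beyond checking carefully that $V$ only needs to preserve $D$ (not be a bijection of $D$): the argument only uses $\Phi(\Gamma(M))\subset\Gamma(M)$, an inclusion rather than an equality, which is enough because $\Phi$'s being a homeomorphism of the ambient space $\mathcal H\times\mathcal H$ still makes this inclusion pass to the closure. This is the one point where one might be tempted to assume too much, so I would be explicit about it in the write-up.
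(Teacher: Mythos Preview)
Your proof is correct and follows the same graph-theoretic strategy as the paper's: encode the intertwining relation as invariance of $\Gamma(M)$ under a unitary product map on $\mathcal H\times\mathcal H$, then use that homeomorphisms commute with closure. The only difference is that the paper uses the map $V^{-1}\times U$ and obtains an \emph{equality} $(V^{-1}\times U)\Gamma(\overline{M})=\Gamma(\overline{M})$, whereas you use $\Phi=V\times U^{-1}$ and work with the \emph{inclusion} $\Phi(\Gamma(M))\subset\Gamma(M)$; your variant has the small advantage that it transparently needs only $VD\subset D$, exactly as you point out.
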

\begin{proof} We consider the isomorphism of Hilbert spaces
	\[
		\mathcal H\times\mathcal H\xrightarrow{V^{-1}\times U}\mathcal H\times\mathcal H
	\]
	\[
		(x,y)\mapsto(V^{-1}(x),U(y)).
	\]
It transforms the graph of $M$ in the graph of $UMV$:
\[
(V^{-1}\times U)\Gamma(M)=\Gamma(UMV).	
\]
As it is a homeomorphism, we obtain
\begin{align*}
\Gamma(\overline{M})&=\overline{\Gamma(M)}=\overline{\Gamma(UMV)}\\
					&=\overline{(V^{-1}\times U)\Gamma(M)}=(V^{-1}\times U)\overline{\Gamma(M)}\\
					&=(V^{-1}\times U)\Gamma(\overline{M}).
\end{align*}
Let $p_1:\mathcal H\times\mathcal H\to\mathcal H$ be the projection onto the first factor. For any $(x,y)\in\mathcal H\times\mathcal H$, we have:
\[
	V^{-1}p_1(x,y)=p_1(V^{-1}\times U)(x,y).
\] 
Hence the domain $D(\overline{M})$ of the closure of $M$ satisfies
\begin{align*}
D(\overline{M})&=p_1(\Gamma(\overline{M}))=p_1((V^{-1}\times U)\Gamma(\overline{M}))\\
				&=V^{-1}p_1(\Gamma(\overline{M}))=V^{-1}(D(\overline{M})).
\end{align*}
This proves that $V$ preserves $D(\overline{M})$. It is obvious that $\Gamma(U\overline{M}V)=(V^{-1}\times U)\Gamma(\overline{M})$. Hence,
\begin{align*}
\Gamma(\overline{M})=(V^{-1}\times U)\Gamma(\overline{M})=\Gamma(U\overline{M}V).	
\end{align*}
\end{proof}

\end{document}